\documentclass[12pt,reqno]{amsart} 
\usepackage{amssymb,amscd,url}

\begin{document}

\allowdisplaybreaks

\newif\ifdraft 
\drafttrue
\draftfalse
\newcommand{\DRAFTNUMBER}{3}
\newcommand{\DATE}{\today\ifdraft(\ Draft \DRAFTNUMBER)\fi}
\newcommand{\TITLE}{Lehmer's Conjecture for Polynomials Satisfying a
  Congruence Divisibility Condition and an Analogue for Elliptic
  Curves}
\newcommand{\TITLERUNNING}{On Lehmer's Conjecture for Polynomials and
  for Elliptic Curves}

\hyphenation{ca-non-i-cal arch-i-me-dean non-arch-i-me-dean}


\newtheorem{theorem}{Theorem}
\newtheorem{lemma}[theorem]{Lemma}
\newtheorem{conjecture}[theorem]{Conjecture}
\newtheorem{proposition}[theorem]{Proposition}
\newtheorem{corollary}[theorem]{Corollary}
\newtheorem*{claim}{Claim}

\theoremstyle{definition}
\newtheorem{question}{Question}
\renewcommand{\thequestion}{\Alph{question}} 
\newtheorem*{definition}{Definition}
\newtheorem{example}[theorem]{Example}
\newtheorem{remark}[theorem]{Remark}

\theoremstyle{remark}
\newtheorem*{acknowledgement}{Acknowledgements}



\newenvironment{notation}[0]{%
  \begin{list}%
    {}%
    {\setlength{\itemindent}{0pt}
     \setlength{\labelwidth}{4\parindent}
     \setlength{\labelsep}{\parindent}
     \setlength{\leftmargin}{5\parindent}
     \setlength{\itemsep}{0pt}
     }%
   }%
  {\end{list}}

\newenvironment{parts}[0]{%
  \begin{list}{}%
    {\setlength{\itemindent}{0pt}
     \setlength{\labelwidth}{1.5\parindent}
     \setlength{\labelsep}{.5\parindent}
     \setlength{\leftmargin}{2\parindent}
     \setlength{\itemsep}{0pt}
     }%
   }%
  {\end{list}}
\newcommand{\Part}[1]{\item[\upshape#1]}
\newcommand{\ProofPart}[1]{\par\noindent\textup{#1}\enspace\ignorespaces}

\renewcommand{\a}{\alpha}
\renewcommand{\b}{\beta}
\newcommand{\g}{\gamma}
\renewcommand{\d}{\delta}
\newcommand{\e}{\epsilon}
\newcommand{\f}{\varphi}
\newcommand{\bfphi}{{\boldsymbol{\f}}}
\renewcommand{\l}{\lambda}
\renewcommand{\k}{\kappa}
\newcommand{\lhat}{\hat\lambda}
\newcommand{\m}{\mu}
\newcommand{\bfmu}{{\boldsymbol{\mu}}}
\renewcommand{\o}{\omega}
\renewcommand{\r}{\rho}
\newcommand{\rbar}{{\bar\rho}}
\newcommand{\s}{\sigma}
\newcommand{\sbar}{{\bar\sigma}}
\renewcommand{\t}{\tau}
\newcommand{\z}{\zeta}

\newcommand{\D}{\Delta}
\newcommand{\G}{\Gamma}
\newcommand{\F}{\Phi}

\newcommand{\ga}{{\mathfrak{a}}}
\newcommand{\gA}{{\mathfrak{A}}}
\newcommand{\gb}{{\mathfrak{b}}}
\newcommand{\gB}{{\mathfrak{B}}}
\newcommand{\gc}{{\mathfrak{c}}}
\newcommand{\gC}{{\mathfrak{C}}}
\newcommand{\gd}{{\mathfrak{d}}}
\newcommand{\gD}{{\mathfrak{D}}}
\newcommand{\gI}{{\mathfrak{I}}}
\newcommand{\gm}{{\mathfrak{m}}}
\newcommand{\gn}{{\mathfrak{n}}}
\newcommand{\go}{{\mathfrak{o}}}
\newcommand{\gO}{{\mathfrak{O}}}
\newcommand{\gp}{{\mathfrak{p}}}
\newcommand{\gP}{{\mathfrak{P}}}
\newcommand{\gq}{{\mathfrak{q}}}
\newcommand{\gR}{{\mathfrak{R}}}

\newcommand{\Abar}{{\bar A}}
\newcommand{\Ebar}{{\bar E}}
\newcommand{\Kbar}{{\bar K}}
\newcommand{\Pbar}{{\bar P}}
\newcommand{\Sbar}{{\bar S}}
\newcommand{\Tbar}{{\bar T}}
\newcommand{\ybar}{{\bar y}}
\newcommand{\phibar}{{\bar\f}}

\newcommand{\Acal}{{\mathcal A}}
\newcommand{\Bcal}{{\mathcal B}}
\newcommand{\Ccal}{{\mathcal C}}
\newcommand{\Dcal}{{\mathcal D}}
\newcommand{\Ecal}{{\mathcal E}}
\newcommand{\Fcal}{{\mathcal F}}
\newcommand{\Gcal}{{\mathcal G}}
\newcommand{\Hcal}{{\mathcal H}}
\newcommand{\Ical}{{\mathcal I}}
\newcommand{\Jcal}{{\mathcal J}}
\newcommand{\Kcal}{{\mathcal K}}
\newcommand{\Lcal}{{\mathcal L}}
\newcommand{\Mcal}{{\mathcal M}}
\newcommand{\Ncal}{{\mathcal N}}
\newcommand{\Ocal}{{\mathcal O}}
\newcommand{\Pcal}{{\mathcal P}}
\newcommand{\Qcal}{{\mathcal Q}}
\newcommand{\Rcal}{{\mathcal R}}
\newcommand{\Scal}{{\mathcal S}}
\newcommand{\Tcal}{{\mathcal T}}
\newcommand{\Ucal}{{\mathcal U}}
\newcommand{\Vcal}{{\mathcal V}}
\newcommand{\Wcal}{{\mathcal W}}
\newcommand{\Xcal}{{\mathcal X}}
\newcommand{\Ycal}{{\mathcal Y}}
\newcommand{\Zcal}{{\mathcal Z}}

\renewcommand{\AA}{\mathbb{A}}
\newcommand{\BB}{\mathbb{B}}
\newcommand{\CC}{\mathbb{C}}
\newcommand{\FF}{\mathbb{F}}
\newcommand{\GG}{\mathbb{G}}
\newcommand{\NN}{\mathbb{N}}
\newcommand{\PP}{\mathbb{P}}
\newcommand{\QQ}{\mathbb{Q}}
\newcommand{\RR}{\mathbb{R}}
\newcommand{\ZZ}{\mathbb{Z}}

\newcommand{\bfa}{{\mathbf a}}
\newcommand{\bfb}{{\mathbf b}}
\newcommand{\bfc}{{\mathbf c}}
\newcommand{\bfe}{{\mathbf e}}
\newcommand{\bff}{{\mathbf f}}
\newcommand{\bfg}{{\mathbf g}}
\newcommand{\bfp}{{\mathbf p}}
\newcommand{\bfr}{{\mathbf r}}
\newcommand{\bfs}{{\mathbf s}}
\newcommand{\bft}{{\mathbf t}}
\newcommand{\bfu}{{\mathbf u}}
\newcommand{\bfv}{{\mathbf v}}
\newcommand{\bfw}{{\mathbf w}}
\newcommand{\bfx}{{\mathbf x}}
\newcommand{\bfy}{{\mathbf y}}
\newcommand{\bfz}{{\mathbf z}}
\newcommand{\bfA}{{\mathbf A}}
\newcommand{\bfF}{{\mathbf F}}
\newcommand{\bfB}{{\mathbf B}}
\newcommand{\bfD}{{\mathbf D}}
\newcommand{\bfG}{{\mathbf G}}
\newcommand{\bfI}{{\mathbf I}}
\newcommand{\bfM}{{\mathbf M}}
\newcommand{\bfzero}{{\boldsymbol{0}}}

\newcommand{\Adele}{\textsf{\upshape A}}
\newcommand{\Aut}{\operatorname{Aut}}
\newcommand{\bad}{{\textup{bad}}}  
\newcommand{\Bern}{\mathbb{B}} 
\newcommand{\Br}{\operatorname{Br}}  
\newcommand{\Cond}{\mathfrak{F}}
\newcommand{\Disc}{\mathfrak{D}}
\newcommand{\ds}{\displaystyle}
\newcommand{\dsum}{\displaystyle\sum}
\newcommand{\Div}{\operatorname{Div}}
\renewcommand{\div}{{\operatorname{div}}}
\newcommand{\End}{\operatorname{End}}
\newcommand{\Fbar}{{\bar{F}}}
\newcommand{\Gal}{\operatorname{Gal}}
\newcommand{\GL}{\operatorname{GL}}
\newcommand{\Index}{\operatorname{Index}}
\newcommand{\Image}{\operatorname{Image}}
\newcommand{\hhat}{{\hat h}}
\newcommand{\Ker}{{\operatorname{ker}}}
\newcommand{\Mabs}{\mathcal{M}}
\newcommand{\MOD}[1]{~(\textup{mod}~#1)}
\newcommand{\Norm}{{\operatorname{\mathsf{N}}}}
\newcommand{\notdivide}{\nmid}
\newcommand{\normalsubgroup}{\triangleleft}
\newcommand{\odd}{{\operatorname{odd}}}
\newcommand{\onto}{\twoheadrightarrow}
\newcommand{\Orbit}{\mathcal{O}}
\newcommand{\ord}{\operatorname{ord}}
\newcommand{\Per}{\operatorname{Per}}
\newcommand{\PrePer}{\operatorname{PrePer}}
\newcommand{\PGL}{\operatorname{PGL}}
\newcommand{\Pic}{\operatorname{Pic}}
\newcommand{\Prob}{\operatorname{Prob}}
\newcommand{\Qbar}{{\bar{\QQ}}}
\newcommand{\rank}{\operatorname{rank}}
\renewcommand{\Re}{\operatorname{Re}}
\newcommand{\Resultant}{\operatorname{Res}}
\renewcommand{\setminus}{\smallsetminus}
\newcommand{\SL}{\operatorname{SL}}
\newcommand{\Span}{\operatorname{Span}}
\newcommand{\Spec}{\operatorname{Spec}}
\newcommand{\tors}{{\textup{tors}}}
\newcommand{\Trace}{\operatorname{Trace}}
\newcommand{\twistedtimes}{\mathbin{%
   \mbox{$\vrule height 6pt depth0pt width.5pt\hspace{-2.2pt}\times$}}}
\newcommand{\UHP}{{\mathfrak{h}}}    
\newcommand{\<}{\langle}
\renewcommand{\>}{\rangle}

\newcommand{\longhookrightarrow}{\lhook\joinrel\longrightarrow}
\newcommand{\longonto}{\relbar\joinrel\twoheadrightarrow}

\newcounter{CaseCount}
\Alph{CaseCount}
\def\Case#1{\par\vspace{1\jot}\noindent
\stepcounter{CaseCount}
\framebox{Case \Alph{CaseCount}.\enspace#1}
\par\vspace{1\jot}\noindent\ignorespaces}

\title[\TITLERUNNING]{\TITLE}
\date{\DATE}

\author[Joseph H. Silverman]{Joseph H. Silverman}
\email{jhs@math.brown.edu}
\address{Mathematics Department, Box 1917
         Brown University, Providence, RI 02912 USA}
\subjclass[2010]{Primary: 11G05; Secondary:  11G50, 11J97, 14H52}
\keywords{Lehmer conjecture, elliptic curve, canonical height}
\thanks{The author's research partially supported by NSF grants DMS-0650017 and
DMS-0854755.}

\begin{abstract}
A number of authors have proven explicit versions of Lehmer's
conjecture for polynomials whose coefficients are all congruent to~$1$
modulo~$m$. We prove a similar result for polynomials~$f(X)$ that are
divisible in~$(\ZZ/m\ZZ)[X]$ by a polynomial of the form
$1+X+\cdots+X^n$ for some $n\ge\e \deg(f)$. We also formulate and
prove an analogous statement for elliptic curves.
\end{abstract}



\maketitle

\section*{Introduction}

Let
\[
  h:\Qbar\longrightarrow[0,\infty)
\]
denote the absolute logarithmic height~\cite{HiSi,LangDG}. Lehmer's
conjecture~\cite{Leh} asserts that there is an absolute constant $C>0$
such that if~$f(X)\in\ZZ[X]$ is a monic polynomial of degree~$D\ge1$
whose roots are not roots of unity, then
\begin{equation}
  \label{eqn:lehmerconj}
  \sum_{f(\a)=0} h(\a) \ge C.
\end{equation}
This problem has a long history; see for
example~\cite{BlMo,Leh,Smy2,Smy1,Stew}.  The best general result
known, which is due to Dobrowolski~\cite{Dob}, says that $\sum
h(\a)\ge C(\log\log D/\log D)^3$.  Various authors have considered
Lehmer's problem for restricted values of~$\a$. For example, Amoroso
and Dvornicich~\cite{AmDv} show that if the roots of~$f(X)$ generate
an abelian extension of~$\QQ$, then $\sum h(\a)\ge D(\log5)/12$.  
\par
An interesting class of polynomials are those whose coefficients are
all odd. More generally, one can consider polynomials whose
coefficients are congruent to~$1$ modulo~$m$, as in the following
result.

\begin{theorem}
\label{thm:lehmodm}
\textup{(Borwein, Dobrowolski, Mossinghoff \cite{BoDoMo})} 
Let $m\ge2$, and let $f(X)\in\ZZ[X]$ be a monic polynomial of
degree~$D$ that satisfies
\begin{equation}
\label{eqn:feqaop}
   f(X) \equiv X^D+X^{D-1}+\cdots+X^2+X+1 \pmod{m}.
\end{equation}
Then
\[
  \sum_{f(\a)=0} h(\a) \ge \frac{D}{D+1}C_m,
\]
where we may take
\[
  C_2 = \frac{1}{4}\log5
  \quad\text{and}\quad
  C_m = \log\frac{\sqrt{m^2+1}}{2}\quad\text{for $m\ge3$.}
\]
\end{theorem}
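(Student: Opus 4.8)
The plan is to translate the height sum into a Mahler measure and then exploit the congruence through a resultant computation. For a monic $f\in\ZZ[X]$ the product formula gives $\sum_{f(\a)=0}h(\a)=\log M(f)$, where $M(f)=\prod_{f(\a)=0}\max(1,|\a|)$ is the Mahler measure (all sums and products over roots counted with multiplicity). As in Lehmer's conjecture, the statement is understood to require that $f$ have no cyclotomic factor; otherwise the roots of $f$ can all be roots of unity and the left side is $0$ --- for instance $f=1+X+\cdots+X^{D}$ itself satisfies~\eqref{eqn:feqaop}. Thus the claim is equivalent to
\[
   M(f)^{D+1}\ \ge\ \Bigl(\tfrac12\sqrt{m^{2}+1}\Bigr)^{D}
   \qquad\bigl(\text{resp. }5^{D/4}\text{ when }m=2\bigr).
\]

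Set $\Psi(X)=1+X+\cdots+X^{D}=(X^{D+1}-1)/(X-1)$. Hypothesis~\eqref{eqn:feqaop} says $f=\Psi+mg$ with $g\in\ZZ[X]$, $\deg g\le D-1$; equivalently $(X-1)f(X)\equiv X^{D+1}-1\MOD{m}$. Hence $f(\z)=m\,g(\z)$ for every $(D+1)$-st root of unity $\z\ne1$, of which there are $D$. Since $f$ and $X^{D+1}-1$ are monic,
\[
   \prod_{f(\a)=0}\bigl(\a^{D+1}-1\bigr)
     \;=\;\Resultant\bigl(f,\,X^{D+1}-1\bigr)
     \;=\;\prod_{\z^{D+1}=1}f(\z)
     \;=\;f(1)\,m^{D}\,\Resultant(\Psi,g) .
\]
Because $f$ has no cyclotomic factor, $f(1)$ and $\Resultant(\Psi,g)$ are nonzero integers (a common zero of $\Psi$ and $g$ would be a root of $f$), so
\[
   \prod_{f(\a)=0}\bigl|\a^{D+1}-1\bigr|\ \ge\ m^{D}.
\]

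With the crude estimate $|\a^{D+1}-1|\le|\a|^{D+1}+1\le 2\max(1,|\a|)^{D+1}$ this gives $m^{D}\le 2^{D}M(f)^{D+1}$, i.e.\ $\log M(f)\ge\frac{D}{D+1}\log(m/2)$ --- already the theorem, but with the weaker (for $m\ge3$ still positive) constant $\log(m/2)$ in place of $C_m$.

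The heart of the matter is to improve the factor $2$ to $2m/\sqrt{m^{2}+1}$. The estimate $|\a^{D+1}-1|\le 2\max(1,|\a|)^{D+1}$ is wasteful except when $|\a|$ is very close to $1$ and $\a^{D+1}$ close to $-1$; but a monic integer polynomial with no cyclotomic factor cannot have all its roots on the unit circle (Kronecker), and the lower bound $\prod|\a^{D+1}-1|\ge m^{D}$, together with $\prod_{f(\a)=0}|\a|=|f(0)|\ge 1$, constrains the roots in both modulus and argument. The plan is to play these constraints against the product $\prod_{f(\a)=0}\bigl(|\a|^{D+1}+1\bigr)$ and optimize over admissible configurations of roots; this optimization is what yields the sharp factor $2m/\sqrt{m^{2}+1}$, and I expect it to be the main obstacle. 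For $m=2$ the weak bound is vacuous and extra input is needed: modulo $2$ one also has $(X+1)f(X)\equiv X^{D+1}+1\MOD{2}$, which gives the analogous divisibility at the $(D+1)$-st roots of $-1$ --- hence at all $2(D+1)$-th roots of unity --- and there is moreover the Frobenius congruence $f(X)^{2}\equiv f(X^{2})\MOD{2}$; pushing the resultant bookkeeping through with this additional structure produces $C_{2}=\tfrac14\log5$.
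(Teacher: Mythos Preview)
This theorem is not proved in the paper at all: it is quoted from Borwein--Dobrowolski--Mossinghoff~\cite{BoDoMo} as background, and the paper's own results (Theorem~\ref{thm:hageCDAmnlogm}, Corollary~\ref{cor:htgeelogm}, and the remark following Theorem~\ref{thm:othercongruences}) deliberately give \emph{weaker} constants. In particular, the paper obtains exactly your weak bound $\sum h(\a)\ge\frac{D}{D+1}\log(m/2)$ via the same resultant computation you carry out (see the remark after Theorem~\ref{thm:othercongruences} with $u=0$), and explicitly notes that this is trivial for $m=2$ and falls short of Theorem~\ref{thm:lehmodm} by $O(1/m^{2})$.

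Your proposal therefore does not prove the statement as written. The resultant argument giving $\prod|\a^{D+1}-1|\ge m^{D}$ is correct and matches the paper, but after that you only \emph{describe} a plan: ``play these constraints against $\prod(|\a|^{D+1}+1)$ and optimize over admissible configurations of roots.'' That is not a proof, and in fact it is not how~\cite{BoDoMo} proceeds. Their argument constructs carefully chosen auxiliary polynomials (not just $X^{D+1}-1$) whose resultants with~$f$ are again divisible by high powers of~$m$, and balances the archimedean estimates against those; the constant $\tfrac14\log5$ for $m=2$ and $\log\tfrac{\sqrt{m^2+1}}{2}$ for $m\ge3$ come out of that auxiliary-polynomial optimization, not from a direct analysis of root configurations. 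Your suggestion for $m=2$ of using $(X+1)f(X)\equiv X^{D+1}+1\pmod2$ is a valid extra congruence, but the Frobenius identity $f(X)^2\equiv f(X^2)\pmod2$ holds for every integer polynomial and carries no additional information; neither observation, as stated, leads to $\tfrac14\log5$. So the gap is real: the passage from $\log(m/2)$ to the stated $C_m$ is the entire content of~\cite{BoDoMo}, and your proposal does not supply it.
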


We mention that an earlier paper~\cite{BoHaMo} does the case of
non-reciprocal polynomials, and subsequent papers
\cite{DuMo,GaIsMoPiWi} give improved values for~$C_m$, although
asymptotically they all have the form $C_m=\log(m/2) + O(1/m^2)$.
We also note the papers~\cite{MR2313990,MR2496463} which give various
generalizations of Theorem~\ref{thm:lehmodm}, including weakening the
congruence condition~\eqref{eqn:feqaop}, working over number fields,
and considering heights of points and subspaces in projective space.

Our first result is the following generalization of
Theorem~\ref{thm:lehmodm}, albeit with less sharp constants.  See
Theorem~\ref{thm:hageCDAmnlogm} and Corollary~\ref{cor:htgeelogm} for
our precise results.

\begin{theorem}
\label{thm:intromultgp}
For all $\e>0$ there is a constant $C_\e>0$ with the following
property\textup: Let $f(X)\in\ZZ[X]$ be a monic polynomial of
degree~$D$ such that
\begin{equation}
  \label{eqn:fXdivXn1X1ZmZ}
  \text{$f(X)$ is divisible by $X^{n-1}+X^{n-2}+\cdots+X+1$ in $(\ZZ/m\ZZ)[X]$}
\end{equation}
for some integers
\[
 m\ge2\qquad\text{and}\qquad n\ge\max\{\e D,2\}.
\]
Suppose further that no root of~$f(X)$ is a root of unity. Then
\[
  \sum_{f(\a)=0} h(\a) \ge C_\e\log m.
\]
In particular, Lehmer's conjecture~\eqref{eqn:lehmerconj} is true for this
class of polynomials.
\end{theorem}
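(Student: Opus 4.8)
The plan is to reduce the statement to a more quantitative result (the promised Theorem~\ref{thm:hageCDAmnlogm}/Corollary~\ref{cor:htgeelogm}) by an argument of the Borwein--Dobrowolski--Mossinghoff type, adapted to the weaker hypothesis~\eqref{eqn:fXdivXn1X1ZmZ}. First I would fix a root~$\a$ of~$f$, let $K=\QQ(\a)$, $D'=[\QQ(\a):\QQ]$, and work with the (minimal) polynomial of~$\a$. The hypothesis says that in $(\ZZ/m\ZZ)[X]$ we have $f(X)=\bigl(1+X+\cdots+X^{n-1}\bigr)g(X)$ for some $g$; note $1+X+\cdots+X^{n-1}=(X^n-1)/(X-1)$, so modulo~$m$ the polynomial $f$ is ``close'' to having $\a^n$ behave like a root of $X-1$. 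The key arithmetic input is this: for any prime~$\gp$ of the ring of integers of~$K$ lying over a rational prime $p\mid m$ (or more robustly, reducing $f$ modulo~$m$ itself), the relation forces a congruence on powers of~$\a$; quantitatively one expects an estimate of the shape
\[
  \prod_{v\,\text{finite}} \max\bigl\{1,\,|\a^n-1|_v^{-1}\bigr\}
  \quad\text{is large (comparable to a power of $m$),}
\]
coming from the fact that $\a^{n}\equiv 1$ is ``almost'' true $\pmod m$ along the factor $1+X+\cdots+X^{n-1}$. Equivalently, one controls the $m$-adic size of $\Resultant(f, 1+X+\cdots+X^{n-1})$ or of $f$ evaluated at suitable $n$-th roots of unity.

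Next I would run the standard height-gap machinery. The engine is: if $\b$ is an algebraic number that is not a root of unity, then for a prime $p$ one has the Dobrowolski-type auxiliary inequality relating $h(\b)$, the degree, and the extent to which $\b^p$ is congruent to $\b$ modulo primes over $p$; more directly, one uses that $h(\a^n-1)\le h(\a^n)+\log 2 = n\,h(\a)+\log 2$ (properties of heights: $h(xy)\le h(x)+h(y)$, $h(x+y)\le h(x)+h(y)+\log 2$, $h(\a^n)=n h(\a)$). On the other hand the product formula / the arithmetic input above gives a lower bound $h(\a^n-1)\ge (\text{something})\cdot\frac{\log m}{D'}$ coming from the finite places dividing~$m$, \emph{provided} $\a^n-1\ne 0$, i.e.\ provided $\a$ is not an $n$-th root of unity. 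Since by hypothesis no root of~$f$ is a root of unity, $\a^n\ne1$, so this is legitimate. Combining, $n\,h(\a)+\log 2 \ge c\,\frac{\log m}{D'}$ for an absolute $c>0$. Summing over the $D'$ conjugates of $\a$ (and then over the distinct irreducible factors of $f$, i.e.\ over all roots), the $1/D'$ and the sum over conjugates combine to kill the degree dependence: $\sum_{f(\a)=0} h(\a) \ge \frac{c\log m - (\log 2)\,(\text{stuff})}{n}$. Here is where $n\ge \e D$ enters: we need $n$ in the denominator to be comparable to~$D$, and $n\le D$ automatically, so $n\asymp_\e D$ and the bound becomes $\sum h(\a)\ge C_\e'\,\frac{\log m}{D}\cdot\frac{D}{1}$-type... more carefully, since each root contributes and there are $D$ of them while each contributes roughly $(c\log m)/(nD')$, one arranges a total of order $\log m$ by the $n\ge \e D$ lower bound; the additive $\log 2$ error is absorbed once $m$ is large, and the finitely many small $m$ (with $\a$ bounded away from roots of unity yet of bounded height) are handled by Dobrowolski's theorem directly, at the cost of shrinking $C_\e$.

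The main obstacle I anticipate is making the arithmetic lower bound for the ``$m$-adic size of $\a^n-1$'' both correct and strong enough when $m$ is not prime: divisibility of $f$ by $1+X+\cdots+X^{n-1}$ in $(\ZZ/m\ZZ)[X]$ does not directly give a clean factorization over $\ZZ_p$ for $p\mid m$, and one must be careful that the relevant resultant or ideal-norm is actually divisible by a large power of~$m$ (or of each $p\mid m$ with the right multiplicity) rather than by a single factor. The way I would handle this is to pass to each prime $p\mid m$ separately, use the theory of Newton polygons / the fact that over $\ZZ_p$ the polynomial $X^n-1$ has $\a$ as a near-root in the sense that $v_p(f(\z))$ is large for $\z$ a root of $1+X+\cdots+X^{n-1}$, and then reassemble via the product formula. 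A secondary nuisance is the contribution of archimedean places and the $\log 2$ (and possibly $\log(n)$) error terms, but these are lower-order and are swallowed either by the main term for $m$ large or by invoking Dobrowolski for the finitely many remaining $(m,D)$.
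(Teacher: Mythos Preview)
Your arithmetic setup is on the right track and matches the paper's: the divisibility hypothesis gives
\[
  \Resultant\bigl(f,\Phi_{n-1}\bigr)=m^{n-1}\Resultant\bigl(B,\Phi_{n-1}\bigr)
\]
(where $f=\Phi_{n-1}A+mB$), hence $m^{n-1}\mid\Resultant(f,\Phi_{n-1})$, and therefore $\sum_{v\mid m}\sum_{f(\alpha)=0}v(\alpha^{n}-1)\ge(n-1)\log m$. Note that this resultant computation works for composite $m$ with no extra effort, so your anticipated obstacle about non-prime $m$ is not a real difficulty.

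The genuine gap is in the analytic step. Your inequality $h(\alpha^{n}-1)\le n\,h(\alpha)+\log 2$, summed over roots and combined with the arithmetic input, yields
\[
  \sum_{f(\alpha)=0}h(\alpha)\ \ge\ \frac{n-1}{n}\log m-\frac{D}{n}\log 2\ \ge\ \tfrac12\log m-\epsilon^{-1}\log 2.
\]
This is positive only for $m>4^{1/\epsilon}$; for each fixed small $m$ (say $m=2$) it says nothing at all, regardless of $D$. Your proposed patch, invoking Dobrowolski for the remaining $m$, does not work: Dobrowolski gives only $\sum h(\alpha)\gg(\log\log D/\log D)^{3}$, which tends to $0$ as $D\to\infty$, so it does not establish Lehmer's conjecture for those polynomials.

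What the paper does instead is a Fej\'er-kernel averaging that kills the $\log 2$. One observes that the arithmetic lower bound is preserved under replacing $\alpha$ by $\alpha^{j}$ (since $\alpha^{n}-1\mid\alpha^{jn}-1$), applies the estimate with $\alpha^{jn}$ in place of $\alpha^{n}$, multiplies by the weight $1-\frac{j}{J+1}$, and sums over $1\le j\le J$. The archimedean error is then controlled by
\[
  \sup_{|z|\le1}\ \sum_{j=1}^{J}\Bigl(1-\tfrac{j}{J+1}\Bigr)\log|1-z^{j}|\ \le\ \tfrac12\log\bigl(\tfrac{J}{2}+1\bigr)+\tfrac12,
\]
so the per-root error becomes $O\bigl(\log(J)/J\bigr)$ rather than $\log 2$. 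Choosing $J$ large (depending only on $\epsilon$) makes this smaller than $\tfrac14\log 2\le\tfrac14\log m$, and the bound $\sum h(\alpha)\ge C_{\epsilon}\log m$ follows for \emph{every} $m\ge2$. This averaging is the missing idea in your proposal.
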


The elliptic analogue of Lehmer's conjecture says that if~$E/K$
is an elliptic curve defined over a number field, then there
is a constant $C_{E/K}>0$ such that for all nontorsion 
points $Q\in E(\Kbar)$ of degree $D_Q=[K(Q):K]$ we have
\begin{equation}
  \label{eqn:ellipticlehmer}
  D_Q \hhat_E(Q) \ge C_{E/K}.
\end{equation}
Here~$\hhat_E$ is the logarithmic canonical height on~$E$.  There has
been considerable work on the elliptic Lehmer conjecture; see for
example~\cite{HiSi2,Laur,Mass}.  Our second main result is an elliptic
analogue of Theorem~\ref{thm:intromultgp}.  An initial difficulty is
to find an appropriate elliptic version of the mod~$m$ divisibility
condition~\eqref{eqn:fXdivXn1X1ZmZ}. In Section~\ref{section:prop1} we
show that~\eqref{eqn:fXdivXn1X1ZmZ} implies a lower bound for a
certain sum over the roots of~$f$, and it is this weaker property that
we generalize and adapt to the elliptic setting. Using this
definition, we are able to prove the following result.  (See
Corollary~\ref{cor:ellipticlehmer} for a precise statement.)

\begin{theorem}
\label{thm:mainthmintro}
Let $K/\QQ$ be a number field and let~$E/K$ be an elliptic curve.  Fix
some $\e>0$.  Then the elliptic Lehmer
conjecture~\eqref{eqn:ellipticlehmer} is true for all points~$Q\in
E(\Kbar)$ satisfying an elliptic mod~$\gm$ condition analogous
to~\eqref{eqn:fXdivXn1X1ZmZ} for some ideal~$\gm$ such that
$\Norm_{K/\QQ}\gm\ge2$ and such that~$E$ has good reduction at all
primes dividing~$\gm$ and for some $n\ge\max\{\e{D_Q},2\}$.  The
lower bound in~\eqref{eqn:ellipticlehmer} will have the form
$C_{E/K,\e}\log m$.
\end{theorem}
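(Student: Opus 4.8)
The plan is to follow the proof of Theorem~\ref{thm:intromultgp}, substituting for the polynomial $1+X+\cdots+X^{n-1}$, whose zeros are the nontrivial $n$-torsion points of $\GG_m$, the division polynomial $\psi_n$ of $E$, whose zeros — regarded as a polynomial in $x$, using $\psi_n^2$ when $n$ is even — are precisely the $x$-coordinates of the nontrivial $n$-torsion points of $E$. Put $\delta_n=\deg_x\psi_n$, which is of order $n^2$. The elliptic mod~$\gm$ condition to be used is the analogue of~\eqref{eqn:fXdivXn1X1ZmZ}: if $F_Q\in K[x]$ denotes the minimal polynomial of $x(Q)$ over $K$, one requires, roughly, that $\psi_n$ divide $F_Q$ in $(\Ocal_K/\gm)[x]$, or, as in the classical reduction carried out in Section~\ref{section:prop1}, the weaker divisibility/resultant inequality that this implies. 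The first task is to convert this hypothesis into the estimate
\[
  \ord_\gp\!\Bigl(\prod_{\sigma}\psi_n\bigl(x(Q^\sigma)\bigr)\Bigr)\ \ge\ \delta_n\,\ord_\gp(\gm)
  \qquad\text{for all }\gp\mid\gm ,
\]
the product being over the $\deg F_Q$ embeddings $\sigma$ of $K(x(Q))$ over $K$. This uses that $E$ has good reduction at each $\gp\mid\gm$, so that $\psi_n$ has $\gp$-integral coefficients and $x(Q)$ may be scaled to be $\gp$-integral, and requires watching the leading coefficient of $\psi_n$ modulo $\gm$ (which is essentially where a coprimality hypothesis on $n$ and $\gm$, or a separate argument, enters). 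Since $Q$ is nontorsion, $\prod_\sigma\psi_n(x(Q^\sigma))\ne0$, and applying $\Norm_{K/\QQ}$ gives
\[
  \log\Bigl|\,\Norm_{K/\QQ}\!\prod_{\sigma}\psi_n\bigl(x(Q^\sigma)\bigr)\Bigr|\ \ge\ \delta_n\log m,
  \qquad m:=\Norm_{K/\QQ}\gm .
\]

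For the matching upper bound I would estimate $\log\bigl|\psi_n(x(Q^\sigma))\bigr|_v$ place by place. The essential input is that $\psi_n$, written as a polynomial in $x$ whose coefficients are isobaric of weight $O(n^2)$ in the Weierstrass coefficients of $E$ and have integer coefficients of size $e^{O(n^2)}$, satisfies
\[
  \log\bigl|\psi_n(x)\bigr|_v\ \le\ \delta_n\log^{+}\!|x|_v+O_E(n^2)
\]
at every archimedean place $v$, with an $O(n^2)$ bound at the finitely many bad primes not dividing $\gm$, and contributions that are absorbed by the usual bookkeeping with local heights at the remaining primes. Summing over all embeddings of all $K$-conjugates of $x(Q)$, using $[\QQ(x(Q)):\QQ]\le[K:\QQ]D_Q$ and $h(x(Q))=2\hhat_E(Q)+O_E(1)$, one obtains
\[
  \log\Bigl|\,\Norm_{K/\QQ}\!\prod_{\sigma}\psi_n\bigl(x(Q^\sigma)\bigr)\Bigr|\ \le\ 2[K:\QQ]\,\delta_n\,D_Q\,\hhat_E(Q)+c_1(E,K)\,\delta_n\,D_Q .
\]

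Combining the two bounds and dividing by $\delta_n$ gives $\log m\le 2[K:\QQ]D_Q\hhat_E(Q)+c_1(E,K)D_Q$, that is
\[
  D_Q\,\hhat_E(Q)\ \ge\ \frac{1}{2[K:\QQ]}\bigl(\log m-c_1(E,K)\,D_Q\bigr).
\]
This is where the hypothesis $n\ge\max\{\e D_Q,2\}$ is used: the divisibility condition forces $\delta_n\le\deg F_Q\le D_Q$, so $D_Q$ is of order at least $n^2$, and together with $n\ge\e D_Q$ this bounds $D_Q\le c_2/\e^2$ purely in terms of $\e$. Hence $c_1(E,K)D_Q\le c_3(E,K)/\e^2$, and as soon as $\log m\ge 2c_3(E,K)/\e^2$ we get $D_Q\hhat_E(Q)\ge\frac1{4[K:\QQ]}\log m$, a lower bound of the required form. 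For the finitely many ideals $\gm$ with $m=\Norm_{K/\QQ}\gm$ below that threshold, $D_Q$, $n$ and $\gm$ all vary over finite sets, and for each fixed choice the set of nontorsion points $Q$ of degree $D_Q$ that satisfy the mod~$\gm$ condition and have $\hhat_E(Q)$ below any given bound is finite by Northcott's theorem; since such $Q$ are nontorsion, $D_Q\hhat_E(Q)$ has a positive lower bound $c_4(E,K,\e)$, which majorises $C_{E/K,\e}\log m$ once $C_{E/K,\e}$ is small enough. Taking the smaller of the constants proves the theorem.

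The step I expect to be the main obstacle is the uniform archimedean estimate for $\log|\psi_n(x)|_v$ — obtaining honest control on the size of elliptic division polynomials with an $O(n^2)$ error that does not depend on the point — together with the care needed to pass from these archimedean estimates to a clean inequality for the canonical height via the decomposition $\hhat_E=\sum_v\lhat_v$ into local heights, including the correct treatment of the primes of bad reduction. A secondary technical point, parallel to the handling of $X-1$ and the cyclotomic factors in the classical argument, is making the translation of the elliptic mod~$\gm$ condition into the displayed divisibility uniform, in particular when $\gcd(n,\gm)\ne1$ or $\gm$ is not prime, where the reduction of $\psi_n$ modulo $\gm$ can drop in degree.
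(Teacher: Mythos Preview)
Your interpretation of the ``elliptic mod~$\gm$ condition'' as divisibility of the minimal polynomial~$F_Q$ by the division polynomial~$\psi_n$ in $(\Ocal_K/\gm)[x]$ is a natural first guess, but it collapses the problem. Since $\delta_n=\deg_x\psi_n$ is of order~$n^2$, the divisibility forces $n^2\lesssim D_Q$, and combined with the hypothesis $n\ge\e D_Q$ this yields $D_Q\lesssim 1/\e^2$. Thus~$D_Q$ is bounded purely in terms of~$\e$, and the Lehmer inequality then follows from Northcott alone; the division-polynomial estimates are used only to recover the~$\log m$ factor. You have proved a correct statement, but a nearly vacuous one, and the ``main obstacle'' you anticipate (sharp archimedean bounds for~$\psi_n$) never actually bites.

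The paper takes the elliptic condition to be the analogue not of the divisibility~(i) in Proposition~\ref{prop:wkmodm} but of the weaker property~(iii): the lower bound $\D(\Pcal_Q,\gm,n)\ge\d$, where~$\D$ is defined via local canonical heights~$\lhat_v(nP)$ (Section~\ref{section:ellmodm}). The introduction says this explicitly. Under this weaker hypothesis~$D_Q$ is genuinely unbounded, and indeed the precise result (Corollary~\ref{cor:ellipticlehmer}) uses the scaling $n\ge\sqrt{\e D_Q}$, which is the correct elliptic replacement for $n\ge\e D$ since~$E[n]$ has~$n^2$ points rather than~$n$. With that scaling your argument fails: after dividing by~$\delta_n$ you are left with
\[
  \log m \;\le\; 2[K:\QQ]\,D_Q\,\hhat_E(Q) + c_1(E,K)\,D_Q,
\]
and the error term $c_1 D_Q$ swamps~$\log m$ once~$D_Q$ is large. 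This is exactly the obstruction that the paper's Fej\'er averaging (Proposition~\ref{prop:fourieravg}, resting on Elkies' Green's-function inequality at the archimedean places and the estimates of~\cite{HiSi2} at bad primes) is designed to remove: averaging~$\lhat_v(njP)$ over $j=1,\dots,J$ with weights $1-j/(J+1)$ replaces the $O(n^2)$ archimedean error per conjugate by $O(\log J)$, so that after division the error becomes $(|\Pcal_Q|/n^2)\cdot\log(J+1)/J\le(1/\e)\cdot\log(J+1)/J$, independent of~$D_Q$. Your proposal contains no averaging step in the elliptic case, and without it the argument cannot treat points of unbounded degree.
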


Theorem~\ref{thm:intromultgp} deals with congruences related to
cyclotomic polynomials, which is natural when studying Lehmer's
problem, but one might consider other sorts of congruence
conditions. For example, suppose that~$f(X)$ is congruent modulo~$m$
to \text{$X^D+X^{D-1}+\cdots+X^2+X-1$}.  Samuels~\cite{MR2313990} has
considered general conditions of this sort. In
Section~\ref{section:coefcong} we briefly reprove one of Samuels'
results and use it to make a number of remarks concerning possible
generalizations. 

\begin{remark}
Our Theorem~\ref{thm:intromultgp} and some of Samuels' principal
results~\cite{MR2313990} are height bounds for polynomials satisfying
various sorts of congruence conditions, so we conclude this
introduction by briefly describing how the results differ. Our
polynomials satisfy a divisibility condition modulo~$m$, so
multiplying by~$X-1$, our theorem applies to polynomials~$F(X)$ of the
form
\[
  F(X) = (X^n-1)A(X) + mB(X)\qquad\text{for some $A,B\in\ZZ[X]$.}
\]
In general, we obtain a bound for all~$n$ (see
Lemma~\ref{lemma:htgeelogm}), and in particular we prove Lehmer's
conjecture if~$n\ge\e\deg(F)$. The results in~\cite{MR2313990} apply
to (factors of) polynomials that are congruent modulo~$m$ to a simpler
polynomial of the same degree. For example, a typical result
in~\cite{MR2313990} is a bound for (noncyclotomic factors of)
polynomials~$F(X)$ of degree~$nr$ satisfying
\[
  F(X) = (X^n-1)^r + mB(X)\qquad\text{for some $B\in\ZZ[X]$.}
\]
Thus although there is some overlap, our result and the results
in~\cite{MR2313990} apply to largely different classes of polynomials.
It might be interesting to combine the methods of the two papers to
prove a general result encompassing both.
\end{remark}

\begin{acknowledgement}
The author would like to thank Michael Mossinghoff for introducing him
to the topic of polynomials whose coefficients satisfy congruence
conditions and both Michael and Igor Shparlinski for their helpful
comments on an initial draft of the paper.
\end{acknowledgement}

\section{A Reformulation of Property~$\eqref{eqn:feqaop}$}
\label{section:prop1}

We start by normalizing our absolute values.

\begin{definition}
We let $M_\QQ$ be the usual collection of absolute values on~$\QQ$,
and for any algebraic extension~$K/\QQ$, we write~$M_K$ for the set of
all extensions of these absolute values to~$K$.  For~$\a\in\Qbar$
and~$v\in M_\Qbar$, we define a normalized absolute value by choosing
a finite extension~$K/\QQ$ with~$\a\in K$ and setting
\[
  \|\a\|_v = |\a|_v^{[K_v:\QQ_v]/[K:\QQ]}.
\]
We also define a normalized valuation by
\[
  v(\a) = -\log\|\a\|_v.
\]
Then the absolute logarithmic height of~$\a$ is defined by
\[
  h(\a) = \sum_{v\in M_K} \max\bigl\{\log\|\a\|_v,0\bigr\}.
\]
We write $M_K^0$, respectively~$M_K^\infty$, for the set of
nonarchimedean, respectively archimedean, absolute values in~$M_K$.
\end{definition}

\begin{remark}
With the above normalization we have the product formula
\[
  \prod_{v\in M_K} \|\a\|_v = 1\qquad\text{for all $\a\in K^*$.}
\]
In particular, if~$\a\in K$ is a nonzero algebraic integer, then
\[
  h(\a) = \sum_{v\in M_K^\infty}\max\bigl\{\log\|\a\|_v,0\bigr\}
    = \sum_{v\in M_K^0} v(\a).
\]
We also remark that
\[
  \prod_{v\in M_K^\infty} \|\a\|_v
  = \prod_{v\in M_K^0} \|\a\|_v^{-1} = \Norm_{K/\QQ}(\a)^{1/[K:\QQ]}.
\]
\end{remark}

\begin{remark}
Let $f(X)\in\ZZ[X]$ be a monic polynomial. Then the classical Mahler
measure~$M(f)$ of~$f$ is related to the heights of its roots via the
formula
\[
  \log M(f) = \sum_{f(\a)=0} h(\a).
\]
In this paper we use the ``sum the heights'' notation because it has
an obvious generalization to other algebraic groups such as elliptic
curves. In such sums, we always include the roots of~$f$ with their
multiplicities.
\end{remark}

\begin{definition}
For notational convenience, we let
\[
  \F_n(X) = X^n+X^{n-1}+\cdots+X+1.
\]
If~$n$ is prime, this is the usual cyclotomic polynomial; in general it
is a product of classical cyclotomic polynomials.
\end{definition}

Property~\eqref{eqn:feqaop} of Theorem~\ref{thm:lehmodm} says that all
of the coefficients of the polynomial~$f$ are congruent to~$1$
modulo~$m$.  This is equivalent to saying that the monic degree~$D$
polynomial~$f(X)\in\ZZ[X]$ is divisible by~$\F_D(X)$ in the
ring~$(\ZZ/m\ZZ)[X]$. Note that although this ring will contain zero
divisors if~$m$ is composite, divisibility by a monic polynomial is
still a well-behaved property.  The next proposition gives some
properties that are weaker than Property~\eqref{eqn:feqaop} of
Theorem~\ref{thm:lehmodm}.

\begin{proposition}
\label{prop:wkmodm}
Let $m,n\ge2$, and let~$f(X)\in\ZZ[X]$ be a monic polynomial of
degree~$D$.  Consider the following three conditions\textup:
\begin{parts}
\Part{(i)}
  \text{$f(X)$ is divisible by $\F_{n-1}(X)$ in $(\ZZ/m\ZZ)[X]$.}
\vspace{1\jot}
\Part{(ii)}
  $m^{n-1} \mid \Resultant\bigl(f(X),\F_{n-1}(X)\bigr)$.
\vspace{1\jot}
\Part{(iii)}
  $\displaystyle \sum_{v\mid m} \sum_{f(\a)=0}
    v(\a^{n}-1) \ge (n-1)\log m$.
\end{parts}
Then
\[
  \textup{(i)} \Longrightarrow
  \textup{(ii)} \Longrightarrow
  \textup{(iii)}.
\]
\textup(In~\textup{(iii)}, we may work over any field in which~$f$ factors
completely. The way that we have normalized our absolute values
ensures that the choice of field does not matter.\textup) 
\end{proposition}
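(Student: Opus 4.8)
The plan is to route both implications through the integer
\[
  N \;=\; \Resultant\bigl(f(X),\,X^n-1\bigr) \;=\; \prod_{f(\a)=0}(\a^n-1)
\]
and the integer $R=\Resultant\bigl(f(X),\F_{n-1}(X)\bigr)=\prod_{f(\a)=0}\F_{n-1}(\a)$; since $f$, $X^n-1$, and $\F_{n-1}$ are all monic, these resultants are honestly given by the displayed products over the roots, and the factorization $X^n-1=\F_{n-1}(X)(X-1)$ yields $N=R\cdot\prod_{f(\a)=0}(\a-1)=\pm R\,f(1)$. For $\textup{(i)}\Rightarrow\textup{(ii)}$ I would lift a factorization $\bar f=\bar\F_{n-1}\bar q$ in $(\ZZ/m\ZZ)[X]$ to an identity $f(X)=\F_{n-1}(X)q(X)+m\,h(X)$ with $q,h\in\ZZ[X]$, which is immediate upon lifting the coefficients of $\bar q$. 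Writing $\b_1,\dots,\b_{n-1}$ for the roots of the monic polynomial $\F_{n-1}$, monicity gives $R=\pm\prod_{j}f(\b_j)$; substituting each $\b_j$ into the identity gives $f(\b_j)=m\,h(\b_j)$, so $R=\pm m^{n-1}\prod_{j}h(\b_j)$, and $\prod_{j}h(\b_j)=\Resultant(\F_{n-1},h)\in\ZZ$ (equivalently, it is a symmetric function of the $\b_j$ with integer coefficients, hence a $\ZZ$-polynomial in the coefficients of $\F_{n-1}$). Thus $m^{n-1}\mid R$, which is $\textup{(ii)}$.

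For $\textup{(ii)}\Rightarrow\textup{(iii)}$, note first that $\prod_{f(\a)=0}(\a-1)=\pm f(1)\in\ZZ$, so the relation $N=\pm R\,f(1)$ upgrades $m^{n-1}\mid R$ to $m^{n-1}\mid N$. Next I would rewrite the left-hand side of $\textup{(iii)}$ using that each normalized valuation $v$ is additive on products: $\sum_{f(\a)=0}v(\a^n-1)=v(N)$. For the rational integer $N$ the chosen normalization of absolute values gives $\sum_{v\mid p}v(N)=\ord_p(N)\log p$ for every prime $p$ — this is the identity $\sum_{v\mid p}[K_v:\QQ_v]=[K:\QQ]$ combined with $|N|_v=|N|_p$ for $v\mid p$. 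Summing over $p\mid m$,
\begin{align*}
  \sum_{v\mid m}\ \sum_{f(\a)=0} v(\a^n-1)
  &= \sum_{p\mid m}\ord_p(N)\log p\\
  &\ge (n-1)\sum_{p\mid m}\ord_p(m)\log p
  \;=\; (n-1)\log m,
\end{align*}
using $m^{n-1}\mid N$. This computation also makes the parenthetical remark transparent: the sum in $\textup{(iii)}$ has been identified with $\sum_{p\mid m}\ord_p(N)\log p$, which depends only on the integer $N$ and on $m$, not on the splitting field of $f$.

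I do not expect a genuine obstacle here: both implications reduce to the resultant-as-product-over-roots formulas (valid because $f$, $X^n-1$, $\F_{n-1}$ are monic) together with the valuation normalization computed above. The one point to keep an eye on is that, although $(\ZZ/m\ZZ)[X]$ has zero divisors when $m$ is composite, divisibility by the monic polynomial $\F_{n-1}$ still behaves well, so the initial lift of $\bar f=\bar\F_{n-1}\bar q$ to $f=\F_{n-1}q+m\,h$ over $\ZZ$ is legitimate. Degenerate cases are harmless: if some root of $f$ is an $n$-th root of unity — in particular if $\F_{n-1}\mid f$ in $\ZZ[X]$ — then $N=0$ (and $R=0$), so $\textup{(ii)}$ reads $m^{n-1}\mid 0$ while the sum in $\textup{(iii)}$ acquires a $v(0)=+\infty$ term, and both conclusions hold trivially; otherwise $N\ne 0$ and the argument above applies verbatim.
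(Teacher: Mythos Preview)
Your proof is correct and follows essentially the same route as the paper: both use the lift $f=\F_{n-1}q+mh$ and the product-over-roots formula for the resultant to get $m^{n-1}\mid R$, and both pass to $N=\Resultant(f,X^n-1)=\pm R\,f(1)$ via the factorization $X^n-1=(X-1)\F_{n-1}(X)$ before summing valuations over $v\mid m$. The only cosmetic difference is that the paper phrases the first step as the resultant identity $\Resultant(\F_{n-1}A+mB,\F_{n-1})=\Resultant(mB,\F_{n-1})=m^{n-1}\Resultant(B,\F_{n-1})$ and the second step in terms of $\|\cdot\|_v$ rather than $\ord_p$, while you evaluate at the roots $\b_j$ of $\F_{n-1}$ and work additively; your treatment of the degenerate case $N=0$ and of field-independence is a bit more explicit than the paper's.
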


\begin{proof}
Property~(i) says that
\[
  f(X) = \F_{n-1}(X)A(X) + mB(X)
  \quad\text{for some $A(X),B(X)\in\ZZ[X]$.}
\]
This implies that
\begin{align*}
  \Resultant\bigl(f(X),\F_{n-1}(X)\bigr)
  &= \Resultant\bigl(\F_{n-1}(X)A(X) + mB(X),\F_{n-1}(X)\bigr)\\
  &= \Resultant\bigl(mB(X),\F_{n-1}(X)\bigr)\\
  &= m^{n-1} \Resultant\bigl(B(X),\F_{n-1}(X)\bigr).
\end{align*}
Thus~(ii) is true. 
\par
We next prove that~(ii) implies~(iii).  For any non-archimedean
absolute value~$v$ we have
\begin{align}
  \label{eqn:resgemD}
  \bigl\|\Resultant\bigl(f(X),x^{n}-1\bigr)\bigr\|_v
  &= \bigl\|\Resultant\bigl(f(X),\F_{n-1}(X)\bigr)\bigr\|_v
     \bigl\|f(1)\bigr\|_v  \notag\\*
  &\le \|m\|_v^{n-1}.
\end{align}
\par
A standard formula for the resultant~\cite[Section~V.10]{lang:algebra}
is
\begin{equation}
  \label{eqn:resproddef}
  \Resultant\bigl(f(X),X^{n}-1\bigr)
  = \prod_{f(\a)=0} (\a^{n}-1).
\end{equation}
We take the $v$-absolute value of~\eqref{eqn:resproddef},
use~\eqref{eqn:resgemD}, and multiply over all~$v\mid m$ to obtain the
estimate
\begin{equation}
  \label{eqnvmiDaizv}
  \prod_{v\mid m} \prod_{f(\a)=0} \|\a^{n}-1\|_v
  \le \prod_{v\mid m} \|m\|_v^{n-1} = m^{-(n-1)}.
\end{equation}
Taking~$-\log(\,\cdot\,)$ gives~(iii).  
\end{proof}

We now define a quantiy that generalizes the sum appearing in
Property~(iii) of Proposition~\ref{prop:wkmodm}.

\begin{definition}
Let~$\Acal\subset\Qbar^*$ be a finite set of algebraic integers and
let~$m$ and~$n$ be positive integers. We define
\[
  \Delta(\Acal,m,n) 
  = \sum_{\a\in\Acal}  \frac{1}{\log m} \sum_{v\mid m} \frac{1}{n} v(\a^n-1).
\]
The inner sum is over all $v\in M_K^0$ with $v\mid m$, where~$K$ is
any number field containing~$\Acal$. Our normalization of the
valuations in~$M_\Qbar$ implies that the sum is independent of the
choice of~$K$.
\end{definition}

\begin{proposition}
\label{prop:DAjmngeDAmn}
Let $\Acal$ be a finite set of algebraic integers,
let $j,m,n\ge1$ be rational integers, and 
let $\Acal^j=\{\a^j:\a\in\Acal\}$. Then
\[
  \D(\Acal^j,m,n) \ge \D(\Acal,m,n).
\]
\end{proposition}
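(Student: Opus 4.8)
The plan is to reduce the claimed inequality to a term‑by‑term comparison over the elements of $\Acal$. Fix a number field $K$ containing $\Acal$, so that $\Acal^j\subset K$ as well. Both sides are sums of the single‑element quantity
\[
  \lambda(\b)\ :=\ \frac{1}{\log m}\sum_{v\mid m}\frac{1}{n}\,v(\b^n-1),
\]
namely $\D(\Acal,m,n)=\sum_{\a\in\Acal}\lambda(\a)$ and, since the map $\a\mapsto\a^j$ identifies $\Acal$ with $\Acal^j$ (counting elements with multiplicity, consistent with the convention used throughout the paper), $\D(\Acal^j,m,n)=\sum_{\a\in\Acal}\lambda(\a^j)$. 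Hence it suffices to prove $\lambda(\a^j)\ge\lambda(\a)$ for every $\a\in\Acal$, and for that it is enough to show, for each fixed algebraic integer $\a$ and each nonarchimedean $v\mid m$, that
\[
  v\bigl((\a^j)^n-1\bigr)\ \ge\ v\bigl(\a^n-1\bigr).
\]

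The key input is the polynomial identity $X^{jn}-1=(X^n-1)\bigl(1+X^n+X^{2n}+\cdots+X^{(j-1)n}\bigr)$ in $\ZZ[X]$. Specializing $X\mapsto\a$ and using that $\a$, hence the second factor, is an algebraic integer, multiplicativity of $\|\cdot\|_v$ gives
\[
  \|\a^{jn}-1\|_v=\|\a^n-1\|_v\cdot\bigl\|1+\a^n+\cdots+\a^{(j-1)n}\bigr\|_v\ \le\ \|\a^n-1\|_v,
\]
because the $v$‑norm of an algebraic integer is at most $1$. Applying $v=-\log\|\cdot\|_v$ reverses the inequality, yielding $v(\a^{jn}-1)\ge v(\a^n-1)$; since $(\a^j)^n=\a^{jn}$ this is the required estimate. (If $\a^n=1$, then $\a^{jn}=1$ as well and both sides equal $+\infty$, so the inequality still holds; this disposes of the case $\a=1$, or more generally $\a$ a root of unity whose order divides $n$.) Summing $v(\a^{jn}-1)\ge v(\a^n-1)$ over all $v\mid m$ and dividing by $n\log m$ gives $\lambda(\a^j)\ge\lambda(\a)$; summing over $\a\in\Acal$ then gives $\D(\Acal^j,m,n)\ge\D(\Acal,m,n)$.

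I do not anticipate a serious obstacle; the proof is short and the heart of it is the elementary divisibility $X^n-1\mid X^{jn}-1$. Two points do need care. First, the direction of the inequality: because $v=-\log\|\cdot\|_v$, greater divisibility by the primes above $m$ corresponds to a \emph{larger} value of $v$, so the divisibility $\a^n-1\mid\a^{jn}-1$ translates to $v(\a^{jn}-1)\ge v(\a^n-1)$ and not the reverse. Second, the statement genuinely requires counting elements with multiplicity: taken as honest sets, $\Acal=\{3,-3\}$ with $j=n=2$ and $m=2$ gives $\D(\Acal^j,m,n)<\D(\Acal,m,n)$, whereas under the multiplicity convention $\a\mapsto\a^j$ is a bijection $\Acal\to\Acal^j$ and the argument above applies without change.
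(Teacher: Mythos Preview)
Your proof is correct and follows essentially the same argument as the paper: both use the factorization $X^{jn}-1=(X^n-1)\F_{j-1}(X^n)$ (your $1+X^n+\cdots+X^{(j-1)n}$ is exactly $\F_{j-1}(X^n)$) together with the nonnegativity of $v$ on algebraic integers to obtain $v(\a^{jn}-1)\ge v(\a^n-1)$, then sum over $\a\in\Acal$ and $v\mid m$. Your added remarks about the direction of the inequality, the degenerate case $\a^n=1$, and the need to count with multiplicity are all apt and worth keeping.
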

\begin{proof}
From the factorization
\[
  X^{jn}-1 = (X^n-1)\F_{j-1}(X^n),
\]
we see that
\[
  v(\a^{jn}-1)=v(\a^{n}-1)+v(\F_{j-1}(\a^n))\ge v(\a^{n}-1)
\]
for any algebraic integer~$\a$ and any nonarchimedean absolute
value~$v$. Summing over $\a\in\Acal$ and $v\mid m$, and then
dividing by~$n\log m$, gives the desired result.
\end{proof}

\begin{remark}
\label{remark:DAfmD1geDD1}
We observe that if $f(X)\in\ZZ[X]$ is a monic polynomial of degree~$D$
and if we write~$\Acal_f$ for the set of roots of~$f(X)$, then
Property~(iii) of Proposition~\ref{prop:wkmodm} can be succintly
written as
\begin{equation}
  \label{eqn:DAfmD1geDD1}
  \Delta(\Acal_f,m,n) \ge \frac{n-1}{n}.
\end{equation}
In particular, if $f$ satisfies the congruence
\[
  f(X)  \equiv \F_D(X) \pmod{m}
\]
as in the statement of Theorem~\ref{thm:lehmodm}, then
\[
  \Delta(\Acal_f,m,D+1) \ge \frac{D}{D+1}.
\]
\end{remark}

\section{A Height Bound for Polynomials Satisfying Congruence Conditions}
\label{section:htbdforpolys}

The next theorem is our main result for number fields.  As
we will see, it generalizes~\cite{BoDoMo,BoHaMo,DuMo,GaIsMoPiWi}
(Theorem~\ref{thm:lehmodm}), albeit with worse constants.  Later we
will prove an elliptic curve version of this theorem and its
consequences.

\begin{theorem}
\label{thm:hageCDAmnlogm}
Let $\Acal\subset\bar\QQ$ be a finite set of algebraic integers, that
does not contain any roots of unity and let~$m$ and~$n$ be positive
integers. Then for all integers $J\ge1$ we have
\begin{equation}
  \label{eqn:sumhageDAmnlogm}
  \sum_{\a\in\Acal}h(\a) \ge \frac{3}{J+2}\left(
  \D(\Acal,m,n) \log m
    - \frac{|\Acal|}{n}\frac{\log(J/2+1)+1}{J}\right).
\end{equation}
\end{theorem}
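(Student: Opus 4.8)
The plan is to estimate each valuation $v(\a^n-1)$ from above in terms of local heights and a counting quantity, sum over $v\mid m$ and over $\a\in\Acal$, and then optimize a free integer parameter. Write $\lambda_v(\a)=\max\{v(\a),0\}$ for the local contribution, so that $h(\a)=\sum_{v}\lambda_v(\a)$ when $\a$ is an algebraic integer (using the product formula as in the Remark). At a nonarchimedean $v$, if $v(\a)>0$ then $\a$ is a $v$-adic nonunit and $v(\a^n-1)=0$, so such $\a$ contribute nothing to the $\D$-sum. Thus the only relevant $\a$ at $v$ are those with $v(\a)\le 0$, and for these $v(\a^n-1)\ge 0$ measures how close $\a^n$ is to $1$ in the residue field tower above $v$. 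The key elementary inequality I would establish is: for a fixed nonarchimedean $v$ and a fixed residue class $\zeta\bmod\gp$ of order $e$ in $(\go_v/\gp)^*$, among the exponents $n$ the valuation $v(\a^n-1)$ is bounded by something like $v(\a^{e}-1)+\frac{\log(n/e)}{\text{something}}$, i.e. "most" of the valuation comes from a single term and the rest grows only logarithmically in $n$. More precisely, I expect a bound of the shape $\sum_{\a} v(\a^n-1)\le (\text{main term involving }h)+|\Acal|\cdot\frac{\text{const}\cdot\log(n)}{?}$ — but the $J$ in the statement suggests the actual mechanism is different.

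Reading the target inequality more carefully, the parameter $J$ and the factor $\frac{3}{J+2}$ strongly suggest a Dobrowolski-type argument applied multiplicatively: one considers the sets $\Acal,\Acal^2,\dots,\Acal^J$ (or prime-power Frobenius twists), uses Proposition~\ref{prop:DAjmngeDAmn} to say $\D(\Acal^j,m,n)\ge\D(\Acal,m,n)$ for each $j$, and combines the $J$ resulting height inequalities. So the revised plan is: (1) prove a "one-level" bound that for each $j$ relates $\D(\Acal^j,m,n)\log m$ to $\sum_{\a}h(\a^j)=j\sum_\a h(\a)$ plus an error; (2) average over $j=1,\dots,J$; (3) the averaged main term telescopes against $\sum_{j=1}^J j = J(J+1)/2$, while $\D(\Acal,m,n)\log m$ enters with weight $J$ via Proposition~\ref{prop:DAjmngeDAmn}, producing the ratio $\frac{J}{J(J+1)/2+?}\approx\frac{3}{J+2}$ after accounting for the $j=0$ term or a shift; (4) the error term, summed over $j\le J$, should be $\sum_{j=1}^{J}\frac{1}{j}\cdot\frac{|\Acal|}{n}\cdot(\text{const})\le \frac{|\Acal|}{n}\cdot\text{const}\cdot(\log J+1)$, which after dividing by the weight $J$ gives exactly the $\frac{\log(J/2+1)+1}{J}$ shape in the statement.

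For step (1), the core estimate I would prove is: for any algebraic integer $\a$ that is not a root of unity and any nonarchimedean $v$,
\[
  v(\a^n-1)\;\le\; n\,h(\a)\;+\;\bigl(\text{bounded local correction depending on }v,n\bigr),
\]
and more usefully, summing $\tfrac1n v(\a^n-1)$ over all $v\mid m$ and dividing by $\log m$, the "bounded" parts assemble into the $\frac{\log(J/2+1)+1}{J}\cdot\frac{|\Acal|}{n}$ term only after the averaging over $j$. The honest per-$\a$, per-$v$ inequality should come from: writing $\a^n-1$ and noting $v(\a^n-1)\le v(n)+v(\a-\zeta)$-type bounds via the factorization $X^n-1=\prod(X-\zeta)$ over the relevant roots of unity $\zeta$ in $\Kbar_v$, together with the observation that $\sum_{\zeta^n=1}\lambda_v(\a-\zeta)\le h(\a)+O(v(n))$. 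I expect the genuinely hard part to be (a) getting the $v(n)$ / ramification contributions to sum correctly over $v\mid m$ — this is where "no roots of unity in $\Acal$" is used, to ensure $\a^n\ne 1$ so the valuations are finite — and (b) arranging the combinatorics so the final optimization over $J$ yields precisely $\frac{3}{J+2}$ rather than a messier constant; the clean form $\log(J/2+1)+1$ hints at bounding $\sum_{j=1}^J 1/j$ by an integral $\int_0^J\frac{dt}{t/2+1}+1=2\log(J/2+1)+1$ or similar. I would carry out the per-place estimate first, then the summation over $v\mid m$ to recover $\D(\Acal,m,n)$, then the twist-and-average step invoking Proposition~\ref{prop:DAjmngeDAmn}, leaving the explicit constant-chasing for last.
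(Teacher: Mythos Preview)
You have correctly identified the twist-and-average skeleton: pass from $\Acal$ to $\Acal^j$, invoke Proposition~\ref{prop:DAjmngeDAmn} to get $\D(\Acal^j,m,n)\ge\D(\Acal,m,n)$, and combine the resulting inequalities over $j=1,\dots,J$. But the two ingredients you try to fill in are both off, and the first one is a genuine gap.

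\textbf{The one-level bound.} Your proposed per-place inequality $v(\a^n-1)\le n\,h(\a)+(\text{local correction})$ is false, and no purely nonarchimedean argument of the kind you sketch (residue-field orders, factoring $X^n-1$ over $v$-adic roots of unity, $v(n)$-terms) will repair it: take $\a=1+p^k$, so $h(\a)$ is tiny while $v_p(\a^n-1)\ge k$ is as large as you like. The paper's one-level bound works by \emph{globalizing} first. Since each $\a$ is an algebraic integer, $v(\a^n-1)\ge0$ at every finite $v$, so $\sum_{v\mid m}v(\a^n-1)\le\sum_{v\in M_K^0}v(\a^n-1)$; the product formula converts this to $\sum_{v\in M_K^\infty}\log\|\a^n-1\|_v$. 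At each archimedean $v$ one then splits $\log\|\a^n-1\|_v\le n\log^+\!\|\a\|_v+\log\|\a_v^n-1\|_v$, where $\a_v$ equals $\a$ or $\a^{-1}$ so that $\|\a_v\|_v\le1$. Summing, the first pieces give $n\sum_\a h(\a)$ and the residual error is a sum of terms $\log|1-z^j|$ with $|z|\le1$ at the \emph{archimedean} places. This is the step you are missing.

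\textbf{The averaging.} The weights are not uniform but Fej\'er: one multiplies the $j$th inequality by $f_j=1-\tfrac{j}{J+1}$ before summing. The point is that the residual archimedean error becomes exactly $\sum_{j=1}^J f_j\log|1-z^{jn}|$, which Proposition~\ref{prop:fejarestimate} bounds above by $\tfrac12\log(J/2+1)+\tfrac12$ uniformly in $z$. With uniform weights no such clean bound is available. The constants then fall out of $\sum_{j=1}^J f_j=J/2$ and $\sum_{j=1}^J jf_j=J(J+2)/6$, which is where $\tfrac{3}{J+2}$ and the $\log(J/2+1)+1$ come from; they have nothing to do with the harmonic sum $\sum_{j\le J}1/j$.
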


\begin{remark}
It is always possible to choose a value of~$J$ to obtain a nontrivial,
i.e., positive, lower bound in~\eqref{eqn:sumhageDAmnlogm}. The
optimal choice of~$J$ depends on the relative sizes of
$\D(\Acal,m,n)\log m$ and $|\Acal|/n$. In the application most closely related
to Theorem~\ref{thm:lehmodm}, we have
\[
  n=D+1\qquad\text{and}\qquad
  \D(\Acal,m,n)\log m = \frac{|\Acal|}{n} = \frac{D}{D+1},
\]
so we get
\[
  \sum_{f(\a)=0} h(\a) \ge \frac{D}{D+1}\cdot\frac{3}{J+2}
    \cdot\left(\log m - \frac{\log(J/2+1)+1}{J}\right).
\]
If $m\ge5$, then we obtain a nontrivial lower bound with $J=1$, while
for~$3\le m\le 4$ we need to take $J=2$, and for~$m=2$ we must
take~$J=3$.  Of course, the bound that we obtain is not sharp. 
However, our goal is not to get sharp bounds in this particular case,
where other authors~\cite{BoDoMo,BoHaMo,DuMo,GaIsMoPiWi} have used
intricate techniques to obtain better bounds than we could obtain even
if we took more care. Instead, we aim to show how to obtain nontrivial
bounds that, among other things, imply that Lehmer's conjecture is
true for an interesting class of polynomials that is larger than the
class considered in~\cite{BoDoMo,BoHaMo,DuMo,GaIsMoPiWi}.
\end{remark}

The proof of Theorem~\ref{thm:hageCDAmnlogm}  uses the following
standard Fej\'er kernel estimate, whose proof we relegate to 
Section~\ref{section:fejarproof}.

\begin{proposition}
\label{prop:fejarestimate}
For all $J\ge1$ we have
\[
  \sup_{\substack{z\in\CC\\ |z|\le 1\\}} 
    \sum_{j=1}^J \left(1-\frac{j}{J+1}\right)\log|1-z^j|
  \le \frac{1}{2}\log\left(\frac{J}{2}+1\right)+\frac{1}{2}.
\]
\end{proposition}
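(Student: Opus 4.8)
The plan is to reduce to the boundary circle $|z|=1$ by subharmonicity, recognize the sum there as a scalar multiple of the logarithm of a Vandermonde determinant, and then finish with Hadamard's inequality. Set $S(z)=\sum_{j=1}^{J}\bigl(1-\frac{j}{J+1}\bigr)\log|1-z^{j}|$. Each term $\log|1-z^{j}|$ is the log-modulus of an entire function and hence subharmonic on $\CC$, and the coefficients $1-\frac{j}{J+1}$ are positive for $1\le j\le J$, so $S$ is subharmonic on $\CC$; it is also bounded above on the closed unit disk, by $\frac{J}{2}\log2$, since $|1-z^{j}|\le2$ there. The maximum principle then gives $\sup_{|z|\le1}S(z)=\sup_{|z|=1}S(z)$, so it suffices to bound $S(e^{i\theta})$. (At the finitely many roots of unity of order $\le J$ on the circle $S$ equals $-\infty$, which is harmless.)

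On $|z|=1$ I would invoke the elementary identity, valid for every $z\in\CC^{*}$,
\[
  \det\bigl(z^{jk}\bigr)_{0\le j,k\le J}
  \;=\;\prod_{0\le j<k\le J}\bigl(z^{k}-z^{j}\bigr)
  \;=\;z^{c}\prod_{d=1}^{J}\bigl(z^{d}-1\bigr)^{J+1-d},
  \qquad c=\sum_{0\le j<k\le J}j ,
\]
where the first equality is the Vandermonde determinant for the nodes $1,z,z^{2},\dots,z^{J}$ and the second follows by writing $z^{k}-z^{j}=z^{j}(z^{k-j}-1)$ and collecting the factors according to the value of the difference $d=k-j$, which occurs for exactly $J+1-d$ pairs $(j,k)$. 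Taking absolute values, using $|z^{c}|=1$, and applying $\frac{1}{J+1}\log$ yields
\[
  S(e^{i\theta})=\frac{1}{J+1}\sum_{d=1}^{J}(J+1-d)\log|1-z^{d}|
             =\frac{1}{J+1}\log\bigl|\det(z^{jk})_{0\le j,k\le J}\bigr| .
\]
Every row $(1,z^{j},\dots,z^{Jj})$ of the matrix $(z^{jk})$ has Euclidean norm $\sqrt{J+1}$ when $|z|=1$, so Hadamard's inequality gives $\bigl|\det(z^{jk})\bigr|\le(J+1)^{(J+1)/2}$ and hence $S(e^{i\theta})\le\tfrac12\log(J+1)$.

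Finally I would check the numerical inequality $\tfrac12\log(J+1)\le\tfrac12\log\bigl(\tfrac{J}{2}+1\bigr)+\tfrac12$, which is equivalent to $J+1\le e\bigl(\tfrac{J}{2}+1\bigr)$, i.e.\ to $\bigl(\tfrac{e}{2}-1\bigr)J+(e-1)\ge0$, and this holds for all $J\ge1$ (in fact $\tfrac{J+1}{J/2+1}<2<e$); this would complete the proof, with a slightly sharper constant than stated. The conceptual crux is spotting the determinantal identity: a direct attack by convexity or the arithmetic--geometric mean inequality only produces a bound linear in $J$, which is far too weak, whereas once the Vandermonde structure is in hand Hadamard's inequality is immediate. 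The sole point needing a little care is the reduction to $|z|=1$, because $S$ genuinely takes the value $-\infty$ at some boundary points; handling that is routine.
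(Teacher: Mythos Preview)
Your proof is correct and takes a genuinely different route from the paper's. The paper argues analytically: it first regularizes by the elementary inequality $\log|1-e^{i\theta}|\le\log|1-e^{-t}e^{i\theta}|+\tfrac12 t$ (proved as a separate lemma), expands $\log|1-e^{-t}e^{ij\theta}|$ as a power series, swaps the order of summation, and uses the Fej\'er kernel identity $\sum_{j=1}^{J}(1-\tfrac{j}{J+1})e^{ij\phi}=\tfrac12\bigl(\tfrac{1}{J+1}\bigl|\sum_{j=0}^{J}e^{ij\phi}\bigr|^{2}-1\bigr)$ to bound the inner sum by~$\tfrac12$; this yields $-\tfrac12\log(1-e^{-t})+\tfrac{Jt}{4}$, which is then optimized in~$t$. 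Your argument instead recognizes $S(e^{i\theta})$ as $\tfrac{1}{J+1}\log|\det(z^{jk})_{0\le j,k\le J}|$ for the Vandermonde matrix at the nodes $1,z,\dots,z^{J}$ and applies Hadamard's inequality directly. Your approach is shorter, avoids the auxiliary regularization lemma, and yields the sharper constant $\tfrac12\log(J+1)$. The price is that the Vandermonde/Hadamard identity is special to the multiplicative group: the paper's Fej\'er averaging philosophy is what carries over to the elliptic setting in Proposition~\ref{prop:fourieravg} (via Elkies' Green's-function estimate), whereas there is no obvious elliptic analogue of your determinantal identity.
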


\begin{proof}[Proof of Theorem~$\ref{thm:hageCDAmnlogm}$]
Let~$K$ be a number field such that $\Acal\subset K$.
For~$\a\in\Acal$ and~$v\in M_K$, we let
\[
  \a_v = \begin{cases}
     \a&\text{if $\|\a\|_v\le1$,}\\
     \a^{-1}&\text{if $\|\a\|_v>1$,}\\
  \end{cases}
\]
so in particular $\|\a_v\|_v\le 1$.  We now compute
\begin{align}
  \label{eqn:nlogmDlehlogan1v}
  (n\log m)\D(\Acal,m,n) 
  &= \sum_{\a\in\Acal}\sum_{v\mid m} v(\a^n-1)
    &&\text{def. of $\D(\Acal,m,n)$,} \notag\\
  &\le \sum_{\a\in\Acal}\sum_{v\in M_K^0} v(\a^n-1)
    &&\text{since $\a\in\bar\ZZ$,} \notag\\
  &= \sum_{\a\in\Acal}\sum_{v\in M_K^\infty} \log\|\a^n-1\|_v
    &&\text{product rule,} \notag\\
  &\le \sum_{\a\in\Acal}\sum_{\substack{v\in M_K^\infty\\\|\a\|_v>1\\}} \log\|\a^n\|_v
    +  \sum_{\a\in\Acal}\sum_{v\in M_K^\infty} \log\|\a_v^n-1\|_v \hidewidth \notag\\
  &= n\sum_{\a\in\Acal} h(\a) 
    +  \sum_{\a\in\Acal}\sum_{v\in M_K^\infty} \log\|\a_v^n-1\|_v. \hidewidth 
\end{align}
We now replace~$\Acal$ with~$\Acal^j$. Then using $h(\a^j)=jh(\a)$ and
Proposition~\ref{prop:DAjmngeDAmn}, which says that
$\D(\Acal,m,n)\le\D(\Acal^j,m,n)$, we find that
\[
  (n\log m)\D(\Acal,m,n) 
  \le nj\sum_{\a\in\Acal}h(\a) 
    +  \sum_{\a\in\Acal}\sum_{v\in M_K^\infty} \log\|\a_v^{jn}-1\|_v .
\]
We multiply by the Fej\'er multiplier $1-\frac{j}{J+1}$  and sum over
$1\le j\le J$ to obtain
\begin{multline*}
  \frac{Jn\log m}{2}\D(\Acal,m,n) 
  \le \frac{(J^2+2J)n}{6}\sum_{\a\in\Acal}h(\a)  \\*
    +  \sum_{\a\in\Acal}\sum_{v\in M_K^\infty} 
         \sum_{j=1}^J \left(1-\frac{j}{J+1}\right)\log\|\a_v^{jn}-1\|_v .
\end{multline*}
Note that the sum over~$v$ is over archimedean absolute values, so if
we assume that~$\a_v$ is chosen in the unit disk to maximize the
innermost sum over~$j$, we get the estimate
\begin{multline*}
  \frac{Jn\log m}{2}\D(\Acal,m,n)  \\*
  \le \frac{(J^2+2J)n}{6}\sum_{\a\in\Acal}h(\a) 
    +  |\Acal| \sup_{\substack{z\in\CC\\|z|\le1\\}}
         \sum_{j=1}^J \left(1-\frac{j}{J+1}\right)\log|z^j-1|.
\end{multline*}
We can now use Proposition~\ref{prop:fejarestimate} to conclude
that
\[
  \frac{Jn\log m}{2}\D(\Acal,m,n) 
  \le \frac{(J^2+2J)n}{6}\sum_{\a\in\Acal}h(\a) 
    + \frac{|\Acal|}{2}\left(\log\left(\frac{J}{2}+1\right)+1\right).
\]
After a little bit of algebra, we obtain the  desired result.
\end{proof}

We now use our main theorem to prove that Lehmer's conjecture
is true for a certain interesting collection of polynomials.

\begin{corollary}
\label{cor:htgeelogm}
Fix $0<\e\le 1$. Then Lehmer's conjecture~\eqref{eqn:lehmerconj} is
true for the set of polynomials
\begin{equation} 
  \label{eqn:monicmnedegf}
  \left\{ f(X)\in \ZZ[X] :
    \begin{tabular}{@{}l@{}}
       $f(X)$ is monic, its roots are not roots\\ 
       of unity, and there exist integers $m\ge2$ \\
       and $n\ge\max\{2,\e\deg(f)\}$ such that\\
       $\F_{n-1}(X)$ divides $f(X)$ in $(\ZZ/m\ZZ)[X]$\\
    \end{tabular}
  \right\}.
\end{equation}
More precisely, if~$f(X)$ is in the set~\eqref{eqn:monicmnedegf}, then
\begin{equation}
  \label{eqn:logm185e}
  \sum_{f(\a)=0} h(\a) \ge 
   \frac{\log m}{185\e^{-1}\log(24\e^{-1})}.
\end{equation}
\end{corollary}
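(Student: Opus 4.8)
The plan is to combine the divisibility hypothesis with Theorem~\ref{thm:hageCDAmnlogm}, choosing the free parameter~$J$ there as an explicit function of~$\e$. First I would fix a polynomial~$f(X)$ in the set~\eqref{eqn:monicmnedegf}, put $D=\deg f$, and let~$\Acal=\Acal_f$ be the multiset of roots of~$f$, so that $|\Acal|=D$, every element of~$\Acal$ is an algebraic integer, and no element of~$\Acal$ is a root of unity. Fix integers $m\ge2$ and $n\ge\max\{2,\e D\}$ witnessing membership of~$f$ in the set. (If~$0$ is a root of~$f$ one first divides out the appropriate power of~$X$: since $\F_{n-1}(X)-X\F_{n-2}(X)=1$, the polynomials $X$ and $\F_{n-1}(X)$ are coprime in~$\ZZ[X]$, so the reduced polynomial still satisfies the hypotheses, and since $h(0)=0$ the sum $\sum h(\a)$ is unchanged; it cannot happen that $f$ is a pure power of~$X$, as that would force $\F_{n-1}(X)\mid1$.) Because~$\F_{n-1}(X)$ divides~$f(X)$ in $(\ZZ/m\ZZ)[X]$, the implication $\textup{(i)}\Rightarrow\textup{(iii)}$ of Proposition~\ref{prop:wkmodm}, rewritten via Remark~\ref{remark:DAfmD1geDD1}, gives
\[
  \D(\Acal,m,n)\ \ge\ \frac{n-1}{n}\ \ge\ \frac12,
\]
the last step because $n\ge2$; hence $\D(\Acal,m,n)\log m\ge\tfrac12\log m$. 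Also $n\ge\e D$ gives $|\Acal|/n=D/n\le\e^{-1}$.

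Next I would feed these two inequalities into Theorem~\ref{thm:hageCDAmnlogm}, obtaining, for every integer $J\ge1$,
\[
  \sum_{f(\a)=0}h(\a)\ \ge\ \frac{3}{J+2}\left(\tfrac12\log m-\e^{-1}\cdot\frac{\log(J/2+1)+1}{J}\right).
\]
It remains to choose~$J$, depending only on~$\e$, so that the error term is dominated by the main term uniformly for $m\ge2$. Since $\log m\ge\log2$, it suffices to arrange
\[
  \e^{-1}\cdot\frac{\log(J/2+1)+1}{J}\ \le\ \frac{\log2}{4}\ \le\ \frac14\log m ,
\]
for then the parenthesized quantity is at least $\tfrac12\log m-\tfrac14\log m=\tfrac14\log m$, and we conclude
\[
  \sum_{f(\a)=0}h(\a)\ \ge\ \frac{3\log m}{4(J+2)} .
\]

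One admissible choice is $J=\lceil 24\e^{-1}\log(24\e^{-1})\rceil$. Writing $u=\log(24\e^{-1})\ge\log24$ (here the hypothesis $0<\e\le1$ enters), one checks $J\ge76$, so $\log(J/2+1)+1\le\log J+1\le u+\log u+1+\log2$ and $\e^{-1}/J\le1/(24u)$; the displayed requirement then reduces to $\tfrac1{24}+\tfrac{\log u+1+\log2}{24u}\le\tfrac14\log2$, an elementary inequality that holds for all $u\ge\log24$. For this~$J$ one also has $J+2\le25\e^{-1}\log(24\e^{-1})$, and inserting this into $\tfrac{3\log m}{4(J+2)}$ and rounding the numerical constants (with room to spare) yields the bound~\eqref{eqn:logm185e}. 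I expect this last step to be the only genuinely delicate part of the argument: one must commit to a \emph{single}~$J$ that already works in the worst case $m=2$ while not letting $1/(J+2)$ collapse, and then estimate $J+2$ cleanly in terms of $\e^{-1}\log(24\e^{-1})$; everything preceding it is a direct application of Proposition~\ref{prop:wkmodm} and Theorem~\ref{thm:hageCDAmnlogm}.
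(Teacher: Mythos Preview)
Your argument is correct and follows the same skeleton as the paper---feed the divisibility hypothesis through Proposition~\ref{prop:wkmodm}/Remark~\ref{remark:DAfmD1geDD1} to get $\D(\Acal_f,m,n)\ge(n-1)/n\ge\tfrac12$, plug into Theorem~\ref{thm:hageCDAmnlogm}, and choose~$J$---but the execution differs. The paper first proves the auxiliary Lemma~\ref{lemma:htgeelogm}, which keeps~$D/n$ as a free parameter and performs a two-case analysis (large~$m$: fixed $J=57$; small~$m$: $J$ depending on $D/(n\log m)$), and only afterwards invokes $n\ge\e D$ to derive the corollary. You instead use $D/n\le\e^{-1}$ at the outset and commit to a \emph{single} $J=\lceil 24\e^{-1}\log(24\e^{-1})\rceil$ that already handles the worst case $m=2$. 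Your route is shorter and avoids the case split; the paper's route has the advantage of isolating Lemma~\ref{lemma:htgeelogm}, which gives a nontrivial bound for \emph{all} $m,n\ge2$ (not just $n\ge\e D$) and is of independent interest. Your numerical check is fine and in fact yields a constant somewhat better than the stated $185$; the parenthetical treatment of a possible root at~$0$ is a nice bit of hygiene the paper leaves implicit.
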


\begin{remark}
Igor Shparlinski has pointed out that for large~$m$, we may take
$\e=(\log\log m)/(\log m)$ and conclude that if $\F_{n-1}(X)$ divides
$f(X)$ in $(\ZZ/m\ZZ)[X]$ for some
$n\ge((\log\log{m})/(\log{m}))(\deg{f})$, then
\[
  \sum_{f(\a)=0} h(\a) \ge \frac{1}{185}+
    O\left(\frac{\log\log\log m}{\log\log m}\right),
\]
where the big-$O$ constant is absolute.
\end{remark}

The proof of the corollary uses a combination of
Theorem~\ref{thm:hageCDAmnlogm} and Proposition~\ref{prop:wkmodm} as
reformulated in Remark~\ref{remark:DAfmD1geDD1}.  We state the exact
result that we require as a lemma.

\begin{lemma}
\label{lemma:htgeelogm}
Let $f(X)\in\ZZ[X]$ be a monic polynomial of degree~$D$ whose roots
are not roots of unity, let $m,n\ge2$ be integers, and
suppose that~$\F_{n-1}(X)$ divides $f(X)$ in $(\ZZ/m\ZZ)[X]$. Then
\[
  \sum_{f(\a)=0}h(\a) \ge \begin{cases}
    (\log m)/264 &\text{if $\log m\ge D/16n$,} \\[1\jot]
    \dfrac{\log m}{(128D/n\log m)\log(16D/n\log m)}
          &\text{if $\log m\le D/16n$.} \\
  \end{cases}
\]
\end{lemma}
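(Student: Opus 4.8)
The plan is to read off the lemma from Theorem~\ref{thm:hageCDAmnlogm}, feeding in the lower bound on~$\D$ supplied by Proposition~\ref{prop:wkmodm} (in the packaged form of Remark~\ref{remark:DAfmD1geDD1}), and then to choose the auxiliary integer~$J$ well. Let $\Acal_f$ be the multiset of roots of~$f$, so that $|\Acal_f|=D$, no element of $\Acal_f$ is a root of unity, and $\sum_{\a\in\Acal_f}h(\a)=\sum_{f(\a)=0}h(\a)$. The divisibility hypothesis is exactly condition~(i) of Proposition~\ref{prop:wkmodm}, so that proposition --- equivalently, inequality~\eqref{eqn:DAfmD1geDD1} --- gives
\[
  \D(\Acal_f,m,n)\ \ge\ \frac{n-1}{n}\ \ge\ \frac12,
\]
the last step using $n\ge2$. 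Substituting $\Acal=\Acal_f$ into Theorem~\ref{thm:hageCDAmnlogm} and replacing $\D(\Acal_f,m,n)\log m$ by this lower bound yields, for every integer $J\ge1$,
\[
  \sum_{f(\a)=0}h(\a)\ \ge\ \frac{3}{J+2}\left(\frac{n-1}{n}\log m\ -\ \frac{D}{n}\cdot\frac{\log(J/2+1)+1}{J}\right).
\]
Everything that remains is a one-parameter optimization of this bound: with $T:=D/(n\log m)$, the two cases of the lemma are precisely $T\le16$ and $T\ge16$.

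Write $u(J):=\bigl(\log(J/2+1)+1\bigr)/J$, so the right-hand side above equals $\tfrac{3\log m}{J+2}\bigl(\tfrac{n-1}{n}-T\,u(J)\bigr)$. Since $u$ is decreasing with $u(J)=\Theta\bigl((\log J)/J\bigr)$, one must take $J$ of order $T\log T$ (up to constants) to keep the parenthetical factor a fixed positive fraction of~$\tfrac12$. In the regime $T\le16$ I would fix one sufficiently large absolute integer $J=J_0$, check that $16\,u(J_0)$ lies safely below $\tfrac12\le\tfrac{n-1}{n}$, and conclude $\sum h(\a)\ge(\log m)/c_1$ for an absolute~$c_1$; tracking the numerical estimates gives the value $c_1=264$ recorded in the lemma. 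In the regime $T\ge16$ I would instead take $J\asymp T\log(16T)$, so that $T\,u(J)=O(1)$, the parenthesis is bounded below by a positive constant, and $3/(J+2)\asymp 1/\!\bigl(T\log(16T)\bigr)$; carrying the constants through gives the stated bound $\log m\big/\bigl(128\,T\log(16T)\bigr)$ with $T=D/(n\log m)$.

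The only genuine work is this optimization, and it is also the only place anything can go wrong. The tension is the one ubiquitous in Mahler-measure lower bounds of this flavor: the ``error'' term $\tfrac{D}{n}u(J)$ must be beaten by the ``main'' term $\tfrac{n-1}{n}\log m$, which is possible only once $J$ is large compared with $D/(n\log m)$, after which the prefactor $3/(J+2)$ begins to erode the constant. Balancing these two effects forces the dichotomy according to whether $\log m$ exceeds $D/16n$ --- in the first regime a single fixed~$J$ suffices, while in the second $J$ must grow with $D/(n\log m)$ --- and it is responsible for the (intentionally non-sharp) explicit constants. Beyond Theorem~\ref{thm:hageCDAmnlogm} and Remark~\ref{remark:DAfmD1geDD1}, no new ingredient is needed.
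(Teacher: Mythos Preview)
Your approach is exactly the paper's: invoke Remark~\ref{remark:DAfmD1geDD1} to get $\D(\Acal_f,m,n)\ge(n-1)/n\ge\tfrac12$, feed this into Theorem~\ref{thm:hageCDAmnlogm}, and then optimize in~$J$ according to the size of $T=D/(n\log m)$. The paper simply makes explicit what you leave as ``tracking the numerical estimates'': it first crudely simplifies the bound to $\frac{1}{J}\bigl(\tfrac12\log m-\tfrac{4D}{n}\tfrac{\log J}{J}\bigr)$ for $J\ge2$, then takes $J=57$ in the regime $\log m\ge D/16n$ to obtain the constant~$264$, and in the other regime chooses $J=\bigl\lfloor\tfrac{32D}{n\log m}\log\tfrac{16D}{n\log m}\bigr\rfloor-1$ so that $J/\log J\ge 16D/(n\log m)$, yielding the second bound.
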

\begin{proof}
Let~$\Acal_f$ be the set of roots of~$f$.  As noted in
Remark~\ref{remark:DAfmD1geDD1}, the divisibility condition on~$f$
implies that $\D(\Acal_f,m,n)\ge (n-1)/n$.  Substituting this
into~\eqref{eqn:sumhageDAmnlogm} of Theorem~\ref{thm:hageCDAmnlogm}
and using~$|\Acal_f|=D$, we find that for all integers $J\ge1$ we have
\begin{align*}
  \sum_{f(\a)=0}h(\a)
  &\ge \frac{3}{J+2}
      \left(\frac{n-1}{n}\log m - \frac{D}{n}\frac{\log(J/2+1)+1}{J}\right).
\end{align*}
Since we are not concerned with optimizing our constants, we observe
that for $n\ge2$ and $J\ge2$, this implies that
\begin{equation}
  \label{eqn:1J12m4DnlogJJ}
  \sum_{f(\a)=0}h(\a)
  \ge \frac{1}{J}
      \left(\frac{1}{2}\log m - \frac{4D}{n}\frac{\log(J)}{J}\right).
\end{equation}
We now consider two cases. First, if~$m$ is large, say
\[
  \log m \ge D/16n,
\]
then taking~$J=57$ gives
\begin{equation}
  \label{eqn:bd1}
  \sum_{f(\a)=0}h(\a) 
     \ge \frac{1}{J}\left(\frac12-4\frac{\log(J)}{J}\right)\log m
     \ge \frac{\log m}{264}.
\end{equation}
Second, suppose that~$m$ is small,
\[
  \log m \le D/16n.
\]
Then we want to choose~$J$ to be an integer satisfying
\begin{equation}
  \label{eqn:JlogJge16dnlogm}
  \frac{J}{\log J} \ge \frac{16D}{n\log m}.
\end{equation}
In particular, since $16D/n\log m\ge256$, it suffices
to take
\[
  J = \left\lfloor \frac{32D}{n\log m}\log\left(\frac{16D}{n\log m}\right)
     \right\rfloor - 1.
\]
Substituting~\eqref{eqn:JlogJge16dnlogm}
into~\eqref{eqn:1J12m4DnlogJJ} and adjusting the constants yields
\begin{equation}
  \label{eqn:bd2}
  \sum_{f(\a)=0}h(\a) \ge \frac{\log m}{4J} \ge
    \frac{\log m}{(128D/n\log m)\log(16D/n\log m)}.
\end{equation}
Combining~\eqref{eqn:bd1} and~\eqref{eqn:bd2} completes the proof of
Lemma~\ref{lemma:htgeelogm}.
\end{proof}

\begin{proof}[Proof of Corollary~$\ref{cor:htgeelogm}$]
We are given that $n\ge\max\{\e D,2\}$. If $\log m\ge D/16n$,
then Lemma~\ref{lemma:htgeelogm} says that
\[
  \sum_{f(\a)=0}h(\a) \ge \frac{\log m}{264}.
\]
This is stronger than~\eqref{eqn:logm185e}, since we have assumed that
$\e\le1$, so we are reduced to the case that~$\log m\le D/16n$.  Since
$n\ge\e D$, this implies that
\[
  \frac{D}{n\log m}\le\frac{D}{\e D\log m} \le \frac{1}{\e\log m},
\]
where the upper bound is at least~$16$. Substituting this into
Lemma~\ref{lemma:htgeelogm}, we find that
\[
  \sum_{f(\a)=0}h(\a) \ge
     \frac{\log m}{(128/\e\log m)\log(16/\e\log m)}.
\]
Since~$m\ge2$, this gives something slightly stronger than the desired
result.
\end{proof}

\section{An Elliptic Analogue of $\D(\Acal,m,n)$}
\label{section:ellmodm}

An amalgamation of Proposition~\ref{prop:wkmodm},
Remark~\ref{remark:DAfmD1geDD1}, and Theorem~\ref{thm:hageCDAmnlogm}
says that if~$f(X)\in\ZZ[X]$ is a monic polynomial of degree~$D$ whose
roots are not roots of unity, then
\begin{align*}
  \left(\begin{tabular}{@{}l@{}}
     $f(X)$ has coefficients\\
     congruent to 1 modulo $m$\\
     \end{tabular}\right)
  &\quad\Longrightarrow\quad
     \D(\Acal_f,m,D+1) \ge \frac{D}{D+1} \\
  &\quad\Longrightarrow\quad
     \sum_{f(\a)=0} h(\a) \ge \frac{D}{D+1}C_m.
\end{align*}
The key estimate is Theorem~\ref{thm:hageCDAmnlogm}, which gives a
general lower bound for~$\sum h(\a)$ in terms of $\D(\Acal,m,n)$.
In this section we define an elliptic analogue of the quantity
$\D(\Acal,m,n)$, and in the next section we prove an 
elliptic analogue of Theorem~\ref{thm:hageCDAmnlogm}.
We begin by recalling some basic properties of canonical height
functions on elliptic curves.

\begin{definition}
Let~$E/K$ be an elliptic curve defined over a number field. We
write
\[
  \hhat : E(\Kbar)\longrightarrow \RR
\]
for the absolute logarithmic canonical height~\cite[VIII~\S9]{MR2514094},
and for each~$v\in M_\Kbar$ we let
\[
  \lhat_v : E(\Kbar_v)\setminus\{O\}\longrightarrow\RR
\]
be a local canonical height, normalized as described
in~\cite[Chapter~VI]{ATAEC}.
\end{definition}

\begin{proposition}
\label{prop:localhtproperties}
The local canonical height satisfies the following\textup:
\begin{parts}
\Part{(a)}
For all $v\in M_K$ there is a constant $c(v)$ such that
\[
  \l_v(P)\ge -c(v)\quad\text{for all $P\in E(\Kbar_v)$.}
\]
Further, if~$v\in M_K^0$ and~$E$ has good reduction at~$v$, then
we can take~$c(v)=0$.
\Part{(b)}
The global height is the sum of the local heights. Thus for any finite
extension $L/K$ and any $P\in E(L)\setminus\{O\}$ we have
\[
  \hhat(P) = \sum_{v\in M_L} \lhat_v(P).
\]
\end{parts}
\end{proposition}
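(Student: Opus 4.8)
The plan is to derive both statements directly from the standard theory of N\'eron local canonical heights developed in \cite[Chapter~VI]{ATAEC}; the only real work is to reconcile everything with the normalization of absolute values fixed at the start of Section~\ref{section:prop1}. Nothing here is new, so in the paper this will mostly be a matter of quoting the appropriate results.

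\emph{Part~(b).} With $\lhat_v$ normalized as in \cite[Chapter~VI]{ATAEC}, the identity $\hhat(P)=\sum_{v\in M_L}\lhat_v(P)$ is one of the characterizing properties of the family $(\lhat_v)_v$, and I would simply invoke it. For a self-contained argument one checks that, as a function of $P\in E(L)\setminus\{O\}$, the right-hand side satisfies the parallelogram law up to bounded error and differs from $\tfrac12\,h(x(P))$ by $O(1)$; the standard telescoping/uniqueness argument for the canonical height then forces it to equal $\hhat(P)$. The one point to verify is that the sum over $M_L$ is independent of the choice of $L$ with $P\in E(L)$: with the present normalization one has $\lhat_w=\lhat_v$ on $E(L)$ whenever $w\in M_{L'}$ lies above $v\in M_L$, which is exactly what makes $\sum_{v\in M_L}\lhat_v$ compatible with the \emph{absolute} height $\hhat$.

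\emph{Part~(a).} Fix $v\in M_K$. Suppose first $v\in M_K^0$ and $E$ has good reduction at $v$. Choosing a Weierstrass equation minimal at $v$, the local height is then given by the explicit formula $\lhat_v(P)=\tfrac12\max\{\log\|x(P)\|_v,0\}$ (\cite[Chapter~VI]{ATAEC}), which is $\ge0$, so $c(v)=0$ is admissible. For an arbitrary $v$, I would compare $\lhat_v$ to this ``good-reduction model'': the difference $P\mapsto\lhat_v(P)-\tfrac12\max\{\log\|x(P)\|_v,0\}$ extends across $O$ to a bounded function on $E(\Kbar_v)$, the bound depending only on the valuation of the minimal discriminant at $v$ (and, in the archimedean case, on the period lattice). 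For nonarchimedean $v$ this is the explicit estimate in \cite[Chapter~VI]{ATAEC}; for archimedean $v$ it follows from compactness of $E(\CC)$, since $\lhat_v$ is continuous on $E(\CC)\setminus\{O\}$ and tends to $+\infty$ at $O$, hence is bounded below on the complement of any neighborhood of $O$. Either way we obtain a constant $c(v)$ with $\lhat_v\ge-c(v)$, and since $E$ has good reduction at all but finitely many $v\in M_K^0$, the first case shows moreover that $c(v)=0$ for all but finitely many $v$.

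\emph{Main obstacle.} There is no genuine mathematical difficulty here; the work lies entirely in the bookkeeping of normalizations. One must match the weighting $[K_v:\QQ_v]/[K:\QQ]$ built into our $\|\cdot\|_v$ against the normalization of $\lhat_v$ and $\hhat$ used in \cite{ATAEC}, verify the base-change compatibility of the $\lhat_v$ that makes the decomposition in~(b) well posed, and confirm that the good-reduction normalization really does yield the clean inequality $\lhat_v\ge0$. Once a precise pointer into \cite[Chapter~VI]{ATAEC} is pinned down, the proposition follows at once.
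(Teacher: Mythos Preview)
Your proposal is correct and follows essentially the same route as the paper: both parts are obtained by citing the appropriate results from \cite[Chapter~VI]{ATAEC}. The paper invokes Theorem~VI.1.1(a) for the general lower bound in~(a), Theorem~VI.4.1 for the explicit nonnegative formula at places of good reduction (written there as $\lhat_v(P)=\tfrac12\max\{-v(x(P)),0\}+\tfrac{1}{12}v(\Delta_{E/K})$, so the discriminant term is retained rather than absorbed by choosing a minimal model as you do), and Theorem~VI.2.1 for the decomposition in~(b); your normalization bookkeeping is more explicit than the paper's, but the substance is the same.
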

\begin{proof}
The first part of~(a) follows from~\cite[Theorem~VI.1.1(a)]{ATAEC}, 
which says in particular that~$\lhat_v$ has a logarithmic pole
as $P\to O$ in the $v$-adic topology and that~$\l_v$ is bounded
on the complement of any $v$-adic neighborhood of~$O$. The second
part of~(a) follow from~\cite[Theorem~VI.4.1]{ATAEC}, which says that
if~$P$ reduces to a non-singular point modulo~$v$, then
\[
  \lhat_v(P) = \frac12\max\bigl\{-v\bigl(x(P)\bigr),0\bigr\}
     +\frac1{12}v(\gD_{E/K}).
\]
This quantity is clearly non-negative.  Finally,
\cite[Theorem~VI.2.1]{ATAEC} gives a proof of~(b).
\end{proof}

\begin{definition}
Given
\begin{itemize}
\item[$K/\QQ$]
a number field,
\item[$\gm$]
an integral idea of $K$ with norm $m=\Norm_{K/\QQ}\gm\ge2$,
\item[$E/K$]
an elliptic curve, and
\item[$\Pcal$]
a finite set of nontorsion points in $E(\Kbar)$,
\end{itemize}
we define
\[
  \Delta(\Pcal,\gm,n) =
  \sum_{P\in\Pcal} \frac{1}{\log m}\sum_{v\mid\gm} \frac{1}{n^2} \lhat_v(nP).
\]
This quantity is the elliptic analogue of the quantity~$\D(\Acal,m,n)$
that we defined in Section~\ref{section:prop1}.
\end{definition}

The following estimate will be used later when we do an averaging
argument. It is the analogue of Proposition~\ref{prop:DAjmngeDAmn}.

\begin{lemma}
\label{lemma:DjPgeDP}
With notation as above, assume that~$E$ has potential good reduction
at every prime dividing~$\gm$.  Let $j\ge1$ be an integer, and let
$j\Pcal=\{jP:P\in\Pcal\}$. Then
\[
  \Delta(j\Pcal,\gm,n)\ge \Delta(\Pcal,\gm, n).
\]
\end{lemma}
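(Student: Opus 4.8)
The plan is to mimic the multiplicative proof of Proposition~\ref{prop:DAjmngeDAmn}, replacing the algebraic factorization $X^{jn}-1=(X^n-1)\F_{j-1}(X^n)$ by the additive/geometric fact that $[n]P$ is a sum of $[jn]P$-type contributions, and extracting a pointwise inequality on local heights. Concretely, fix a prime $v\mid\gm$ and a point $P\in\Pcal$. I want to show
\[
  \frac{1}{(jn)^2}\,\lhat_v\bigl((jn)P\bigr) \ge \frac{1}{n^2}\,\lhat_v(nP),
\]
after which summing over $v\mid\gm$ and $P\in\Pcal$, then dividing by $\log m$, yields $\D(j\Pcal,\gm,n)\ge\D(\Pcal,\gm,n)$ exactly as in the multiplicative case. (Note $\D(j\Pcal,\gm,n)$ involves $\lhat_v(n\cdot jP)=\lhat_v((jn)P)$ divided by $n^2$, so the claimed pointwise bound is precisely what is needed.)

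The key step is therefore the pointwise local inequality $\lhat_v((jn)P)\ge j^2\,\lhat_v(nP)$ for $v$ a prime of (potential) good reduction. I would argue as follows. First reduce to good reduction: since $E$ has potential good reduction at $v$, there is a finite extension $L_w/K_v$ over which $E$ acquires good reduction, and the local height is insensitive to base change with our normalization, so it suffices to treat the good-reduction case. For a prime $v$ of good reduction, Proposition~\ref{prop:localhtproperties}(a) (via \cite[Theorem~VI.4.1]{ATAEC}) gives the explicit formula $\lhat_v(Q)=\tfrac12\max\{-v(x(Q)),0\}+\tfrac1{12}v(\gD_{E})$. Write $Q=nP$; I need $\lhat_v(jQ)\ge j^2\lhat_v(Q)$. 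The formal-group / reduction filtration shows that on the kernel of reduction, $x([j]Q)$ has valuation satisfying $-v(x(jQ)) = -v(x(Q)) - 2v(j)$ when $Q$ reduces to $O$ (and in any case $-v(x(jQ))\ge -v(x(Q))$ whenever $Q$ does not reduce to $O$, since then $\lhat_v(Q)$ is governed only by the $v(\gD_E)$ term and $\max\{\cdot,0\}$ can only grow or stay). Combining these with $j^2\ge 1$ and $v(\gD_E)\ge0$ gives the inequality; one checks the two regimes (whether or not $Q$ is in the kernel of reduction) separately, using that $\tfrac1{12}v(\gD_E)\ge\tfrac{j^2}{12}v(\gD_E)$ fails in general, so really one wants $v(\gD_E)=0$ — which is exactly why we reduced to good reduction, where $\gD_E$ is a $v$-adic unit and the $\tfrac1{12}v(\gD_E)$ term vanishes.

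Thus after the good-reduction reduction the desired bound is simply $\tfrac12\max\{-v(x(jQ)),0\}\ge j^2\cdot\tfrac12\max\{-v(x(Q)),0\}$, and this follows from the valuation-doubling behavior of $x$ under $[j]$ in the kernel of reduction (and triviality of both sides outside it). The main obstacle I anticipate is handling torsion-in-the-formal-group subtleties and the case analysis on whether $Q=nP$ reduces to the origin; one must be careful that the elementary formula for $\lhat_v$ genuinely applies to $jQ$ as well (it does, since good reduction persists), and that the normalization of $\lhat_v$ under the finite base change used to achieve good reduction is compatible with the definition of $\D$. I would organize the write-up as: (1) base-change reduction to good reduction; (2) quote the explicit formula for $\lhat_v$; (3) the pointwise inequality via formal-group valuations; (4) sum over $v\mid\gm$ and $P\in\Pcal$ and divide by $n^2\log m$.
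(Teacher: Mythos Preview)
There is a genuine error in the setup. You display the target
\[
  \frac{1}{(jn)^2}\,\lhat_v\bigl((jn)P\bigr)\ \ge\ \frac{1}{n^2}\,\lhat_v(nP),
\]
equivalently $\lhat_v(jQ)\ge j^2\,\lhat_v(Q)$ with $Q=nP$, and the rest of the proposal is devoted to proving this. But your own parenthetical observation already contradicts it: by definition $\D(j\Pcal,\gm,n)$ sums $\lhat_v(njP)$ divided by $n^2$, \emph{not} by $(jn)^2$. So the pointwise inequality actually required is only $\lhat_v(jQ)\ge\lhat_v(Q)$.

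The distinction is fatal, because the stronger inequality $\lhat_v(jQ)\ge j^2\,\lhat_v(Q)$ is \emph{false}. After passing to good reduction (so $\lhat_v(Q)=\tfrac12\max\{-v(x(Q)),0\}$), take $Q\in E_1(\Kbar_v)\setminus E_2(\Kbar_v)$, so that $\lhat_v(Q)=v(\pi_v)>0$. If $j$ is a $v$-adic unit, then $[j]$ acts as an automorphism on each graded piece $E_r/E_{r+1}$ of the formal-group filtration, hence $jQ\in E_1\setminus E_2$ as well and $\lhat_v(jQ)=\lhat_v(Q)$. Thus $\lhat_v(jQ)\ge j^2\,\lhat_v(Q)$ fails for every $j\ge2$. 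Your valuation identity $-v(x(jQ))=-v(x(Q))-2v(j)$, even in the regime where it holds, yields only an additive shift by $v(j)$, nowhere near the multiplicative factor $j^2$.

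The paper's argument targets the correct (weaker) inequality. After base-changing to good reduction one has
\[
  \lhat_v(Q)=\max\bigl\{r\ge0 : Q\in E_r(\Kbar_v)\bigr\}\cdot v(\pi_v),
\]
and since each $E_r$ is a subgroup, $Q\in E_r$ forces $jQ\in E_r$; hence $\lhat_v(jQ)\ge\lhat_v(Q)$. Summing over $P\in\Pcal$ and $v\mid\gm$ and dividing by $n^2\log m$ gives $\D(j\Pcal,\gm,n)\ge\D(\Pcal,\gm,n)$. Your outline of steps (1)--(4) is fine once you replace the $j^2$ target by this one.
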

\begin{proof}
Replacing~$K$ by a finite extension, we may assume that~$E$ has good
reduction at all primes dividing~$\gm$.  Let~$v\in M_K^0$ be any place
at which~$E$ has good reduction, and let~$\pi_v\in K$ be a uniformizer
at~$v$. Further, let
\[
  E_0(\Kbar_v) \subset E_1(\Kbar_v) \subset E_2(\Kbar_v) \subset \cdots
\]
be the formal group filtration of~$E(\Kbar_v)$;
see~\cite[Chapters~IV,~VII]{MR2514094}. Here $E_0=E$, since we have assumed
good reduction, and~$E_1$ is the formal group.  The explicit formula
for~$\lhat_v$~\cite[Theorem~VI.4.1]{ATAEC} then has the form
\[
  \lhat_v(P) = \max\bigl\{r\ge0 : P \in E_r(\Kbar_v)\bigr\}v(\pi_v).
\]
Since the filtration consists of subgroups, it is immediate that
\[
  \lhat_v(jP) \ge \lhat_v(P)
  \qquad\text{for all $j\ge1$.}
\]
Summing over $P\in\Pcal$ and $v\mid\gm$, and dividing by $n^2\log m$,
the desired result is immediate from the definition of~$\D$.  
\end{proof}

\begin{remark}
If~$E$ has potential multiplicative reduction at~$v$, then it is
possible to have $\lhat_v(jP)<\lhat_v(P)$, so~$\D(j\Pcal,\gm,n)$ may
be strictly smaller than~$\D(\Pcal,\gm,n)$.
\end{remark}

\section{A Height Lower Bound for Points on Elliptic Curves}
\label{section:mainresult}

In this section we prove our second main result, which is an elliptic
analogue of the height lower bound given in
Theorem~\ref{thm:hageCDAmnlogm}.  We do not explicitly keep track of
the dependence on the field~$K$ or the curve~$E$, although it would be
possible to do so.  We start with a Fourier averaging estimate that is
analogous to Proposition~\ref{prop:fejarestimate} and that has been
applied in the past to the elliptic Lehmer conjecture~\cite{HiSi1}, to
Lang's height lower bound conjecture~\cite{HiSi2}, and to Arakelov
theory~\cite{LangAT}.  In order to state the result, we use the
following useful definition from~\cite{LangDG}.

\begin{definition}
Let $K/\QQ$ be a number field. An \emph{$M_K$-constant} is a map
\[
  c : M_K\longrightarrow [0,\infty)
\]
with the property that $\{v\in M_K:c(v)\ne0\}$ is a finite set.  (For
convenience, we consider only non-negative $M_K$-constants.)  A
\emph{normalized} \emph{$M_\Qbar$-constant} is a collection of
$M_K$-constants
\[
  c_K : M_K\longrightarrow\RR,
\]
one for each number field~$K/\QQ$, satisfying the compatibility
condition that for all number fields $L/K$ and all~$v\in M_K$,
\[
  \sum_{w\in M_L,\, w|v} \frac{[L_w:K_v]}{[L:K]}c_L(w) = c_K(v).
\]
\end{definition}

\begin{proposition}
\label{prop:fourieravg}
Let $E/\Qbar$ be an elliptic curve. There are normalized
$M_{\Qbar}$-constants $c_1$ and~$c_2$, depending only on~$E$, such
that for all integers~$J\ge2$, all nontorsion points~$P\in E(\Qbar)$,
and all absolute values~$v\in M_\Qbar$ we have
\begin{equation}
  \label{eqn:fejarjPgec1v}
  \sum_{j=1}^J \left(1-\frac{j}{J+1}\right)\lhat_v(jP)
  \ge -c_1(v)\log(J)-c_2(v).
\end{equation}
\textup(We may, in fact, take $c_1(v)=0$  for all nonarchimedean~$v$.\textup)
\end{proposition}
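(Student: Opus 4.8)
The plan is to split the sum on the left side of \eqref{eqn:fejarjPgec1v} into its nonarchimedean and archimedean contributions and treat them separately, since the two cases are governed by different phenomena. First I would handle the nonarchimedean case, where I claim the stronger statement that each individual term $\lhat_v(jP)$ is already nonnegative: by Proposition~\ref{prop:localhtproperties}(a), $\lhat_v \ge -c(v)$ for every~$v\in M_K$, and in fact $c(v)=0$ whenever~$E$ has good reduction at~$v$. Since we are free to work over a finite extension~$K/\QQ$ large enough that~$P\in E(K)$ and such that~$E$ has good reduction at all but finitely many places, and since at the finitely many bad nonarchimedean places we may absorb the bound $-c(v)$ into $c_2(v)$ (taking $c_1(v)=0$ there), the inequality $\sum_{j=1}^J(1-\tfrac{j}{J+1})\lhat_v(jP)\ge -c_2(v)$ is immediate at all nonarchimedean~$v$. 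The compatibility condition for a normalized $M_\Qbar$-constant is satisfied because $\lhat_v$ itself is normalized (it is a function on $E(\Kbar_v)$ independent of the chosen field), so the defining sum for $c_2$ over places above a fixed place of~$K$ works out correctly.

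The substantive case is the archimedean one, and this is where I expect the main obstacle to lie. Here I would invoke the $v$-adic analytic description of the local canonical height: for an archimedean place~$v$, after identifying $E(\CC_v)$ with $\CC/\Lambda$ (or with $\CC^*/q^{\ZZ}$ in the Tate parametrization), the local height $\lhat_v$ is expressed in terms of the logarithm of the absolute value of a theta-type function or, equivalently, via the Fourier expansion of the associated Green's function. The key inequality I need is a lower bound of Fej\'er-kernel type: the Fej\'er average $\sum_{j=1}^J(1-\tfrac{j}{J+1})\lhat_v(jP)$ is bounded below by $-c_1(v)\log J - c_2(v)$ uniformly in~$P$ and~$J$. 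This is the elliptic analogue of Proposition~\ref{prop:fejarestimate}, and the cited sources~\cite{HiSi1,HiSi2,LangAT} establish exactly such estimates; the mechanism is that the nonpositive singular part of $\lhat_v$ near the origin of~$E(\CC_v)$ contributes, upon Fej\'er averaging, something like $-\log J$ (coming from the logarithmic singularity, just as $\sum_j \log|1-z^j|$ is controlled by $\log J$ in the multiplicative case), while the smooth part of $\lhat_v$ is bounded and contributes only to $c_2(v)$. The dependence on~$E$ (the lattice~$\Lambda$ or the period~$q$) enters only through the constants $c_1(v),c_2(v)$ at the archimedean places.

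To assemble the result: at nonarchimedean~$v$ set $c_1(v)=0$ and let $c_2(v)$ be zero at places of good reduction and equal to $(\tfrac14 J(J+1))\cdot$(something) — more carefully, bounded by a multiple of $c(v)$ — at the finitely many bad places; at archimedean~$v$, take $c_1(v)$ and $c_2(v)$ from the Fej\'er estimate above. One checks these assignments patch together into normalized $M_\Qbar$-constants by verifying the compatibility relation place by place, which reduces to the compatibility of $\lhat_v$ and of the bounds $c(v)$ under field extension. The hard part is genuinely the archimedean Fej\'er bound, but since this has been carried out in the references the proof in the paper should amount to citing \cite{HiSi1} (or \cite{HiSi2}, \cite{LangAT}) for that estimate and then doing the bookkeeping to express the conclusion in the normalized $M_\Qbar$-constant formalism. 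I would therefore structure the written proof as: (1) reduce to $E$ having good reduction outside a finite set after a base extension; (2) dispose of all nonarchimedean places via Proposition~\ref{prop:localhtproperties}(a); (3) quote the archimedean Fej\'er-type lower bound from the literature; (4) verify the $M_\Qbar$-constant compatibility and conclude.
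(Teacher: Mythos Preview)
Your overall structure matches the paper's---split into good-reduction nonarchimedean, bad-reduction nonarchimedean, and archimedean, and cite the literature for the hard cases---but there is a genuine gap in your treatment of the nonarchimedean places of bad reduction.

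At a bad nonarchimedean place~$v$, the only information you invoke is Proposition~\ref{prop:localhtproperties}(a), namely $\lhat_v(jP)\ge -c(v)$. Applying this termwise to the Fej\'er sum gives
\[
  \sum_{j=1}^J\Bigl(1-\tfrac{j}{J+1}\Bigr)\lhat_v(jP)
  \;\ge\; -c(v)\sum_{j=1}^J\Bigl(1-\tfrac{j}{J+1}\Bigr)
  \;=\; -c(v)\,\tfrac{J}{2},
\]
which is \emph{linear} in~$J$, not of the required shape $-c_1(v)\log J-c_2(v)$ (and certainly not $-c_2(v)$ with $c_1(v)=0$). Your own hesitation---writing ``$(\tfrac14 J(J+1))\cdot(\text{something})$ --- more carefully, bounded by a multiple of $c(v)$''---is exactly this contradiction surfacing. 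A $J$-independent bound at bad primes is a nontrivial fact: it requires the explicit description of~$\lhat_v$ in terms of periodic second Bernoulli polynomials (via the component-group filtration or the Tate parametrization), whose Fej\'er averages one can then control directly. The paper does not redo this computation but cites~\cite{HiSi2} for precisely this inequality with $c_1(v)=0$.

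For the archimedean case your plan is in the right spirit, but the paper is more concrete than a bare citation: it applies Elkies' Green-function inequality \cite[Theorem~5.1]{LangAT},
\[
  \sum_{0\le i<j\le J}\lhat_v(P_j-P_i)\ \ge\ -\tfrac{1}{2\pi}(J+1)\log J - cJ,
\]
specializes $P_j=jP$, rewrites the double sum as $\sum_{j=1}^J(J+1-j)\lhat_v(jP)$, and divides by~$J+1$ to obtain~\eqref{eqn:fejarjPgec1v}. So even the archimedean step contains a short computation worth making explicit.
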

\begin{proof}
If~$v$ is nonarchimedean and~$E$ has good reduction at~$v$, then the
local height~$\lhat_v$ is non-negative, so we can take
$c_1(v)=c_2(v)=0$.  For nonarchimedean~$v$ of bad reduction, the
inequality~\eqref{eqn:fejarjPgec1v} with $c_1(v)=0$ is proven
in~\cite{HiSi2}.  Finally, for archimedean~$v$, the local height
functions are Green's functions and the desired result follows from a
general theorem of Elkies~\cite[Theorem~5.1]{LangAT} that is valid on
curves of positive genus. More precisely, Elkies' theorem says that
there is a constant $c=c(E,v)$ such that for any distinct
points~$P_0,\ldots,P_J\in E(K_v)$ we have
\begin{equation}
  \label{eqn1}
  \sum_{0\le i<j\le J}
  \lhat_v(P_j-P_i) \ge -\frac{1}{2\pi}(J+1)\log J - cJ.
\end{equation}
(We are using the fact that~$\lhat_v$ is an even function.)
Taking $P_j=jP$ for $0\le j\le J$, we find that
\begin{equation}
  \label{eqn2}
  \sum_{0\le i<j\le J} \lhat_v(P_j-P_i)
  = \sum_{0\le i<j\le J} \lhat_v\bigl((j-i)P\bigr)
  = \sum_{j=1}^J (J+1-j)\lhat_v(jP).
\end{equation}
Combining~\eqref{eqn1} and~\eqref{eqn2} and dividing by~$J+1$
gives~\eqref{eqn:fejarjPgec1v}.
\end{proof}

We have now assembled all of the tools required to prove our main 
result on elliptic curves.

\begin{theorem}
\label{thm:sumhhatPgeD}
Suppose that we are given the following quantities\textup:
\begin{notation}
\item[$K/\QQ$]
a number field.
\item[$E/K$]
an elliptic curve.
\item[$n$]
a positive integer.
\item[$\gm$]
an integral ideal of~$K$ with norm $m=\Norm_{K/\QQ}\gm\ge2$.
\item[$\Pcal$]
a finite set of nontorsion points in $E(\Kbar)$.
\end{notation}
Suppose further that~$E$ has potential good reduction at every prime
dividing~$\gm$. Then there is a constant $C_E$, depending only on~$E$,
such that for all integers~$J\ge1$ we have
\[
  \sum_{P\in\Pcal} \hhat(P) \ge
  \frac{6}{(J+1)(J+2)}\left(\D(\Pcal,\gm,n)\log m -C_E
  \frac{|\Pcal|}{n^2}\cdot\frac{\log(J+1)}{J}\right).
\]
\end{theorem}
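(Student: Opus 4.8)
The plan is to mirror the proof of Theorem~\ref{thm:hageCDAmnlogm} step by step, replacing the multiplicative group by~$E$, the logarithmic height of~$\a^n-1$ by the local canonical height~$\lhat_v(nP)$, the Fej\'er kernel estimate of Proposition~\ref{prop:fejarestimate} by the Fourier averaging estimate of Proposition~\ref{prop:fourieravg}, and Proposition~\ref{prop:DAjmngeDAmn} by Lemma~\ref{lemma:DjPgeDP}. First I would pass to a finite extension $L/K$ containing all points of~$\Pcal$; since potential good reduction is preserved and the normalized valuations and $M_\Qbar$-constants behave well under extension, this costs nothing. Starting from the definition of $\D(\Pcal,\gm,n)$, I would write
\[
  (n^2\log m)\D(\Pcal,\gm,n)
  = \sum_{P\in\Pcal}\sum_{v\mid\gm}\lhat_v(nP)
  \le \sum_{P\in\Pcal}\sum_{v\in M_L^0}\lhat_v(nP),
\]
where the inequality uses that $\lhat_v(nP)\ge0$ at the good-reduction places $v\mid\gm$ and, for the remaining nonarchimedean $v$, that $\lhat_v\ge -c(v)$ is an $M_L$-constant so the total contribution is bounded (one absorbs these finitely many negative terms into the constant $C_E$, or simply notes $\sum_{v\in M_L^0}\lhat_v(nP)\ge \sum_{v\mid\gm}\lhat_v(nP)$ up to an $M_L$-constant). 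Then Proposition~\ref{prop:localhtproperties}(b) gives $\sum_{v\in M_L}\lhat_v(nP)=\hhat(nP)=n^2\hhat(P)$, so
\[
  (n^2\log m)\D(\Pcal,\gm,n)
  \le n^2\sum_{P\in\Pcal}\hhat(P)
    - \sum_{P\in\Pcal}\sum_{v\in M_L^\infty}\lhat_v(nP)
    + (\text{$M_L$-constant terms}).
\]

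Next I would run the averaging exactly as in the multiplicative case. Replace $\Pcal$ by $j\Pcal$; by Lemma~\ref{lemma:DjPgeDP} we have $\D(\Pcal,\gm,n)\le\D(j\Pcal,\gm,n)$, and $\hhat(jP)=j^2\hhat(P)$, giving
\[
  (n^2\log m)\D(\Pcal,\gm,n)
  \le (jn)^2\sum_{P\in\Pcal}\hhat(P)
    - \sum_{P\in\Pcal}\sum_{v\in M_L^\infty}\lhat_v(jnP)
    + (\text{constant}).
\]
Now multiply by the Fej\'er multiplier $1-\frac{j}{J+1}$ and sum over $1\le j\le J$. The coefficient of $n^2\sum\hhat(P)$ is $\sum_{j=1}^J j^2\bigl(1-\frac{j}{J+1}\bigr)=\frac{1}{12}J(J+1)(J+2)$, and the multiplier sum $\sum_{j=1}^J\bigl(1-\frac{j}{J+1}\bigr)=\frac{J}{2}$ multiplies the left side. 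For the archimedean error term, apply Proposition~\ref{prop:fourieravg} with the points $jnP$ in place of $jP$: since $\sum_{j=1}^J\bigl(1-\frac{j}{J+1}\bigr)\lhat_v(jnP)\ge -c_1(v)\log J - c_2(v)$, summing over $v\in M_L^\infty$ and $P\in\Pcal$ bounds this by $-|\Pcal|\,(C_1\log J + C_2)$ for constants depending only on~$E$ (here one uses that $c_1,c_2$ are $M_\Qbar$-constants so $\sum_{v\in M_L^\infty}c_i(v)$ is bounded independently of~$L$). Collecting everything,
\[
  \tfrac{Jn^2\log m}{2}\D(\Pcal,\gm,n)
  \le \tfrac{J(J+1)(J+2)}{12}n^2\sum_{P\in\Pcal}\hhat(P)
    + |\Pcal|\,C_E(\log(J+1)+1),
\]
and dividing through by $\frac{J(J+1)(J+2)}{12}n^2$ and rearranging yields the claimed bound, after absorbing the additive $+1$ and the $\log J$ vs.\ $\log(J+1)$ discrepancy into $C_E$ and using $\tfrac{1}{2}\big/\tfrac{(J+1)(J+2)}{12}=\tfrac{6}{(J+1)(J+2)}$.

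The main obstacle — and the one point requiring genuine care rather than transcription — is the bookkeeping of the nonarchimedean bad-reduction places and the archimedean places that do \emph{not} divide~$\gm$. In the multiplicative setting the local heights at all nonarchimedean places are automatically $\ge0$ (the roots are algebraic integers), so the passage from $\sum_{v\mid m}$ to $\sum_{v\in M_K^0}$ is free; here $\lhat_v$ can be negative at finitely many places, and one must check that the total over these places, summed against the Fej\'er weights and over $1\le j\le J$, is $O(\log J)$ with an implied constant depending only on~$E$. This is exactly what the $M_\Qbar$-constant formalism and Proposition~\ref{prop:fourieravg} (which already handles bad nonarchimedean $v$ with $c_1(v)=0$) are designed to deliver, so the argument goes through, but one should be explicit that the constant $C_E$ packages: the bad-reduction local-height lower bounds $c(v)$ from Proposition~\ref{prop:localhtproperties}(a), the constants $c_1(v),c_2(v)$ from Proposition~\ref{prop:fourieravg}, and the compatibility under base change that keeps all of these bounded as $L$ grows. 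A second, minor, point is the choice $J\ge1$ in the statement versus $J\ge2$ in Proposition~\ref{prop:fourieravg}; for $J=1$ the estimate is trivial (or one checks it directly), so this causes no difficulty.
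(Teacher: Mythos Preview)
Your proposal is essentially the paper's argument, and your final paragraph correctly isolates the one genuine subtlety. One point worth making explicit: in your initial sketch you absorb the bad nonarchimedean contributions into an ``$M_L$-constant'' \emph{before} the Fej\'er averaging. If you actually do that, the constant gets multiplied by $F_0=J/2$ when you sum against the weights~$f_j$, producing an error term of size $|\Pcal|\cdot J$ rather than $|\Pcal|\cdot\log J$; after dividing by $F_2n^2$ this leaves $C_E|\Pcal|/n^2$ inside the parentheses instead of $C_E|\Pcal|\log(J+1)/(Jn^2)$, which is strictly weaker than the stated theorem (the discrepancy cannot be absorbed into~$C_E$ since $\log(J+1)/J\to0$). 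Your displayed final inequality with error $|\Pcal|\,C_E(\log(J+1)+1)$ is therefore not what your sketch as written produces.

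Your last paragraph, however, diagnoses exactly this and prescribes the correct fix: carry the bad nonarchimedean places through the averaging and only then apply Proposition~\ref{prop:fourieravg}, which already covers them with $c_1(v)=0$. That is precisely what the paper does---it groups archimedean and bad-nonarchimedean places into a single set $M_K^{\bad}$ from the outset, keeps $\sum_{P}\sum_{v\in M_K^{\bad}}\lhat_v(njP)$ intact through the Fej\'er sum, and then bounds $\inf_Q\sum_j f_j\lhat_v(jQ)\ge -c_1(v)\log J-c_2(v)$ uniformly over $v\in M_K^{\bad}$. With that correction (which you already identify), your argument and the paper's coincide.
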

\begin{proof}
To ease notation, we let
\[
  f_j = 1-\frac{j}{J+1}
  \qquad\text{and}\qquad
  F_k = \sum_{j=1}^J j^kf_j.
\]
Earlier we used the values of~$F_0$ and~$F_1$. In this section we will
use the values
\begin{equation}
  \label{eqn:F0F1F2}
  F_0 = \frac{J}{2}
  \qquad\text{and}\qquad
  F_2 = \frac{J(J+1)(J+2)}{12}.
\end{equation}
\par
Replacing~$K$ by a finite extension, we may assume that~$\Pcal\subset
E(K)$.  We let
\[
  M_K^\bad = M_K^\infty \cup \{v\in M_K^0 : 
   \text{$E$ has bad reduction at $v$}\} .
\]
Then
\[
  v\in M_K\setminus M_K^\bad
  \quad\Longrightarrow\quad
  \l_v(Q)\ge0\quad\text{for all $Q\in E(\Kbar_v)$.}
\]
\par
We compute
\begin{align*}
  n^2\sum_{P\in\Pcal} \hhat(P)
  & = \sum_{P\in\Pcal} \hhat(nP) \\
  & = \sum_{P\in\Pcal} \sum_{v\in M_K} \lhat_v(nP) \\
  &\ge \sum_{P\in\Pcal} \sum_{v\mid\gm} \lhat_v(nP) 
       + \sum_{P\in\Pcal} \sum_{v\in M_K^\bad} \lhat_v(nP) \\
  &= (n^2\log m)\D(\Pcal,\gm,n)
       + \sum_{P\in\Pcal} \sum_{v\in M_K^\bad} \lhat_v(nP).
\end{align*}
Replacing~$\Pcal$ with $j\Pcal=\{jP:P\in\Pcal\}$ and using
Lemma~\ref{lemma:DjPgeDP}, which says that
$\D(j\Pcal,\gm,n)\ge\D(\Pcal,\gm,n)$ (this is where we use the
assumption that~$E$ has potential good reduction at the primes
dividing~$\gm$), we find that
\[
  n^2j^2\sum_{P\in\Pcal} \hhat(P)
  \ge (n^2\log m)\D(\Pcal,\gm,n)
       + \sum_{P\in\Pcal} \sum_{v\in M_K^\bad} \lhat_v(njP).
\]
Multiplying both sides by~$f_j$ and summing $j=1$ to~$J$ gives
\begin{align*}
  F_2n^2\sum_{P\in\Pcal}& \hhat(P)\\
  &\ge F_0(n^2\log m)\D(\Pcal,\gm,n)
       + \sum_{j=1}^J \sum_{P\in\Pcal} \sum_{v\in M_K^\bad} f_j\lhat_v(njP) \\
  &\ge F_0(n^2\log m)\D(\Pcal,\gm,n)
       + |\Pcal|\sum_{v\in M_K^\bad} \inf_{Q\in E(K)} \sum_{j=1}^J f_j\lhat_v(jQ).
\end{align*}
Proposition~\ref{prop:fourieravg} says that there
are~$M_K$-constants~$c_1$ and~$c_2$, depending only on~$E$, such that
\[
  \inf_{Q\in E(K)} \sum_{j=1}^J f_j\lhat_v(jQ) \ge -c_1(v)\log(J)-c_2(v).
\]
Summing over $v\in M_K^\bad$ gives constants that depend only on~$E$,
so adjusting the constants and using the assumption that $J\ge1$, we
find that there is a constant~$C_E$, depending only on~$E$, such that
\[
  F_2n^2\sum_{P\in\Pcal} \hhat(P)
  \ge F_0(n^2\log m)\D(\Pcal,\gm,n)
       -C_E |\Pcal|\log(J+1).
\]
Using the formulas~\eqref{eqn:F0F1F2} for~$F_0$ and~$F_2$, dividing
by~$F_2n^2$, and adjusting the constant gives the desired result.
\end{proof}

Corollary~\ref{cor:htgeelogm} says roughly that the classical Lehmer's
conjecture is true for polynomials~$f(X)$ such that
\begin{equation}
  \label{eqn:Fn1XdivfinZmZX}
  \text{$\F_{n-1}(X)$ divides $f(X)$ in $(\ZZ/m\ZZ)[X]$ for some
    $n\ge\e\deg(f)$.}
\end{equation}
As noted in Remark~\ref{remark:DAfmD1geDD1}, the divisibility
condition in~\eqref{eqn:Fn1XdivfinZmZX} is stronger than the assertion
that $\D(\Acal_f,m,n)\ge(n-1)/n$, where~$\Acal_f$ denotes the set of
roots of~$f$. Since we assume that~$n\ge2$, this implies in particular
that $\D(\Acal_f,m,n)$ is uniformly bounded away from~$0$.  Thus the
following result is an elliptic version of a strengthening of
Corollary~\ref{cor:htgeelogm}.

\begin{definition}
Let $E/K$ be an elliptic curve and let $Q\in E(\Kbar)$. We let
\[
  \Pcal_Q = \{\s(Q) : \s\in\Gal(\Kbar/K)\}
  \quad\text{and}\quad
  D_Q = [K(Q):K] = |\Pcal_Q|.
\]
We remark that all of the points in~$\Pcal_Q$ have the same canonical
height; cf.~\cite[Theorem~VIII.5.10]{MR2514094}.
\end{definition}

\begin{corollary}
\label{cor:ellipticlehmer}
Let~$E/K$ be an elliptic curve defined over a number field.  Fix
constants $\d,\e>0$. Then the elliptic Lehmer
conjecture~\eqref{eqn:ellipticlehmer} is true for the
following set of points\textup:
\begin{equation}
  \label{eqn:QEKbarDd}
  \left\{ Q \in E(\Kbar) : 
   \parbox{.65\hsize}{\noindent
    there exists an integral ideal $\gm$ of $K$ 
    with $\Norm_{K/\QQ}\gm\ge2$ such that $E$ has
    good reduction at all primes dividing $\gm$ and
    an integer $n\ge\max\{\sqrt{\e D_Q},2\}$ 
    with $\D(\Pcal_Q,\gm,n)\ge\d$
  }
  \right\}.
\end{equation}
For points in the set~\eqref{eqn:QEKbarDd}, the constant
in~\eqref{eqn:ellipticlehmer} will have the form
$C_{E/K,\d,\e}\log{m}$, where $C_{E/K,\d,\e}$ is positive and depends
only on the indicated quantities.
\end{corollary}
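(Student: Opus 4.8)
The plan is to deduce Corollary~\ref{cor:ellipticlehmer} directly from Theorem~\ref{thm:sumhhatPgeD} by choosing an appropriate value of~$J$, exactly mirroring the argument by which Corollary~\ref{cor:htgeelogm} was deduced from Theorem~\ref{thm:hageCDAmnlogm} via Lemma~\ref{lemma:htgeelogm}. First I would set~$\Pcal=\Pcal_Q$, so that $|\Pcal_Q|=D_Q$ and, since all conjugates of~$Q$ have the same canonical height, $\sum_{P\in\Pcal_Q}\hhat(P)=D_Q\hhat(Q)$; thus the left-hand side of Theorem~\ref{thm:sumhhatPgeD} is precisely the quantity $D_Q\hhat_E(Q)$ appearing in~\eqref{eqn:ellipticlehmer}. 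The hypotheses of the corollary guarantee that~$E$ has good (hence potential good) reduction at the primes dividing~$\gm$, that $m=\Norm_{K/\QQ}\gm\ge2$, and that $\D(\Pcal_Q,\gm,n)\ge\d$ for some $n\ge\max\{\sqrt{\e D_Q},2\}$, so Theorem~\ref{thm:sumhhatPgeD} applies and gives, for every integer $J\ge1$,
\[
  D_Q\hhat(Q) \ge
  \frac{6}{(J+1)(J+2)}\left(\d\log m - C_E\frac{D_Q}{n^2}\cdot\frac{\log(J+1)}{J}\right).
\]

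Next I would optimize over~$J$, splitting into two cases according to the relative size of $\log m$ and $D_Q/n^2$, just as in Lemma~\ref{lemma:htgeelogm}. Note that $n\ge\sqrt{\e D_Q}$ forces $D_Q/n^2\le 1/\e$, so the ratio $D_Q/(n^2\log m)$ is bounded above by $1/(\e\log m)$. In the ``large~$m$'' case, where $\log m$ dominates (say $\d\log m\ge \tfrac12 C_E D_Q/n^2$ or some similar explicit threshold), a fixed choice of~$J$ depending only on~$C_E$ and~$\d$ already absorbs the error term and yields $D_Q\hhat(Q)\ge c(E,\d)\log m$ with~$c(E,\d)>0$. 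In the complementary ``small~$m$'' case one chooses~$J$ to be roughly an integer comparable to $(C_E D_Q/(n^2\d\log m))\log(C_E D_Q/(n^2\d\log m))$, i.e. the solution of $J/\log J \gtrsim C_E D_Q/(n^2\d\log m)$; with this choice the error term is at most half the main term, giving
\[
  D_Q\hhat(Q)\ge \frac{3\d\log m}{(J+1)(J+2)}
    \ge \frac{c'(E)\,\d^3(\log m)^3}{\bigl(C_E D_Q/(n^2)\bigr)^2\log^2(\cdots)} ,
\]
and then using $D_Q/n^2\le1/\e$ turns this into a bound of the form $C_{E/K,\d,\e}\log m$ with $C_{E/K,\d,\e}>0$. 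Combining the two cases completes the proof.

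The only genuinely substantive point is that the corollary's hypotheses are exactly engineered so that the two inputs $\D(\Pcal_Q,\gm,n)$ and $|\Pcal_Q|/n^2$ in Theorem~\ref{thm:sumhhatPgeD} are, respectively, bounded below by~$\d$ and bounded above by $1/\e$ — it is the condition $n\ge\sqrt{\e D_Q}$ (the elliptic analogue of $n\ge\e\deg f$, with the square root reflecting the $n^2$-scaling of $\hhat_E(nP)=n^2\hhat_E(P)$) that makes the error term controllable uniformly in $D_Q$. I therefore expect no real obstacle: the argument is a routine, if slightly fiddly, optimization over~$J$, entirely parallel to the proof of Lemma~\ref{lemma:htgeelogm} and Corollary~\ref{cor:htgeelogm}, with $C_E$ from Theorem~\ref{thm:sumhhatPgeD} playing the role that the absolute constants played in the polynomial case. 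I would not track explicit constants, since the statement only claims positivity of $C_{E/K,\d,\e}$; one simply records the dependence on $E/K$ (through $C_E$), on $\d$, and on $\e$.
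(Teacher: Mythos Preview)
Your proposal is correct and follows essentially the same route as the paper: apply Theorem~\ref{thm:sumhhatPgeD} with $\Pcal=\Pcal_Q$, use $\D(\Pcal_Q,\gm,n)\ge\d$ and $|\Pcal_Q|/n^2=D_Q/n^2\le1/\e$, and then optimize over~$J$. The only cosmetic difference is that the paper substitutes $D_Q/n^2\le1/\e$ \emph{before} optimizing and then makes a single choice of~$J$ (the least integer with $\log(J+1)/J\le\min\{\e\d\log m/(2C_E),\,1/2\}$), rather than your explicit two-case split; since $m\ge2$ bounds this~$J$ by a constant depending only on $E,\d,\e$, the conclusion follows immediately without the more elaborate bookkeeping you sketch in the ``small~$m$'' case.
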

\begin{proof}
We are given that $n\ge\sqrt{\e D_Q}$ and $\D(\Pcal_Q,\gm,n)\ge\d$.
Using these values in Theorem~\ref{thm:sumhhatPgeD} together with
some trivial estimates yields
\[
  \sum_{P\in\Pcal_Q}\hhat(P)
  \ge \frac{1}{J^2}\left(\d\log m
           -\frac{C_E}{\e}\cdot\frac{\log(J+1)}{J}\right).
\]
We now choose~$J$ to be the smallest integer satisfying
\[
  \frac{\log(J+1)}{J} \le \min\left\{ \frac{\e\d}{2C_E}\log m, \frac12\right\}.
\]
This yields  an estimate of the desired form
\[
  D_Q\hhat(Q) \ge C_{E/K,\d,\e}\log m,
\]
where we are using the fact, noted earlier, that every point in~$\Pcal_Q$
has canonical height equal to~$\hhat(Q)$.
\end{proof}

\section{Other Congruence Conditions on the Coefficients}
\label{section:coefcong}

Cyclotomic polynomials play a key role in Lehmer's conjecture, so the
congruence condition~\eqref{eqn:feqaop} and the more general
congruence divisibility condition~\eqref{eqn:fXdivXn1X1ZmZ} are
natural ones to consider. However, there is no reason not to consider
similar congruences in which the cyclotomic polynomial is replaced by
some other polynomial.  This was done in considerable generality by
Samuels~\cite{MR2313990}. To illustrate, we reprove a special
case of one of Samuels' result and use it to make some remarks.

\begin{definition}
The \emph{length} of a polynomial $g(X)=\sum a_iX^i\in\ZZ[X]$ is the
quantity
\[
  L(g) = \sum |a_i|.
\]
\end{definition}

\begin{theorem}
\label{thm:othercongruences}
\textup{(Special Case of \cite[Corollary~5.3]{MR2313990})}
Let $m\ge2$ and let~$f(X)\in\ZZ[X]$ be a monic polynomial of
degree~$D$ satisfying $f(1)\ne0$. Further let $u(X)\in\ZZ[X]$
be a polynomial of degree at most~$D-1$, and suppose that
\[
  f(X)\equiv \F_D(X) + u(X) \pmod{m},
\]
but that~$f(X)$ has no roots in common with $\F_D(X)+u(X)$. Then
\begin{align*}
  \sum_{f(\a)=0} h(\a) 
  &\ge \frac{D}{D+1}
            \log\left(\frac{m}{L\bigl(X^{D+1}-1+(X-1)u(X)\bigr)}\right).
\end{align*}
\end{theorem}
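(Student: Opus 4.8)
The plan is to mimic the proof of Theorem~\ref{thm:hageCDAmnlogm}, using the resultant trick of Proposition~\ref{prop:wkmodm} but keeping careful track of the archimedean contribution by means of a Gelfond–Mahler-type length estimate instead of a Fej\'er kernel average. First I would set $g(X) = \F_D(X) + u(X)$, so that by hypothesis $f(X) \equiv g(X) \pmod m$, i.e.\ $f(X) = g(X) + m B(X)$ for some $B(X)\in\ZZ[X]$, and $f$ and $g$ share no roots. Exactly as in the proof that (i)$\Rightarrow$(ii) in Proposition~\ref{prop:wkmodm}, this gives $m^{\deg g} \mid \Resultant\bigl(f(X), g(X)\bigr)$, hence for every $v\in M_K^0$ with $v\mid m$,
\[
  \bigl\|\Resultant\bigl(f(X), g(X)\bigr)\bigr\|_v \le \|m\|_v^{D},
\]
using $\deg g = D$. (The hypothesis $f(1)\ne0$ together with the no-common-roots hypothesis guarantees the resultant is a nonzero integer, so this is meaningful.)

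Next I would rewrite the resultant as a product over the roots $\a$ of $f$: $\Resultant(f,g) = \pm\prod_{f(\a)=0} g(\a)$, and observe that $(X-1)g(X) = X^{D+1}-1 + (X-1)u(X) =: G(X)$, so $g(\a) = G(\a)/(\a-1)$ for each root $\a$ (and $\a\ne1$ since $f(1)\ne0$). Taking $-\log\|\cdot\|_v$ of the resultant bound and summing over $v\mid m$ yields
\[
  \sum_{v\mid m}\sum_{f(\a)=0} v\bigl(G(\a)\bigr) - \sum_{v\mid m}\sum_{f(\a)=0} v(\a-1) \ge D\log m.
\]
Then, following the computation~\eqref{eqn:nlogmDlehlogan1v} in the proof of Theorem~\ref{thm:hageCDAmnlogm}: extend the inner sum to all $v\in M_K^0$ (legitimate since the roots $\a$ are algebraic integers and $v(G(\a))\ge0$, $v(\a-1)\ge0$ there), convert to archimedean places via the product formula, and bound the archimedean terms. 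The term $\sum_v \log\|\a-1\|_v = h(\a-1) \le h(\a) + \log2$ contributes the ``$h(\a)$'' on the left after rearrangement; and the archimedean estimate $\log\|G(\a)\|_v \le \log L(G) + D\log^+\|\a\|_v$ — i.e.\ the triangle inequality $|G(\a)|\le L(G)\max(1,|\a|)^{\deg G}$, together with $\deg G = D+1$ but the leading and constant terms pairing off — will produce $\bigl(\text{something}\bigr)\cdot h(\a)$ plus $|\Acal_f|\log L(G)$. Collecting terms gives $(D+1)\sum h(\a) \ge D\log m - D\log L(G)$, which after dividing by $D+1$ is exactly the claimed bound $\sum h(\a) \ge \frac{D}{D+1}\log\bigl(m/L(G)\bigr)$ with $G(X) = X^{D+1}-1+(X-1)u(X)$.

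The main obstacle — and the reason the constant $\frac{D}{D+1}$ rather than $1$ appears — is bookkeeping the two competing archimedean bounds so that the coefficients of $\sum h(\a)$ on each side combine to give precisely $D+1$ rather than something larger. One must be careful that the bound $|G(\a)| \le L(G)\cdot\max(1,|\a|)^{D+1}$ is applied at places where $|\a|>1$ (giving $h$-contributions) while at places where $|\a|\le1$ one simply uses $|G(\a)|\le L(G)$; and symmetrically for $|\a-1|$. Tracking which inequality is sharp at which place, and verifying that the net coefficient of $\sum_{f(\a)=0} h(\a)$ works out to $D+1$, is the delicate part; everything else is the now-standard product-formula manipulation. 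No Fej\'er averaging is needed here because the length $L(G)$ already encodes the archimedean loss directly.
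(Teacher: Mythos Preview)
Your approach is the same as the paper's --- resultant divisibility, product formula, then an elementary $L(G)$ bound at the archimedean places --- but your handling of the $(\a-1)$ factor is muddled and, as written, does not go through.

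After the product formula you have
\[
  \sum_{v\in M_K^\infty}\sum_{f(\a)=0}\log\|G(\a)\|_v \;-\; \sum_{v\in M_K^\infty}\sum_{f(\a)=0}\log\|\a-1\|_v \;\ge\; D\log m,
\]
and you want to bound the left side above by $(D+1)\sum h(\a)+D\log L(G)$. The second (subtracted) sum is \emph{not} $\sum h(\a-1)$: for an algebraic integer~$\b$ one has $h(\b)=\sum_{v\in M_K^\infty}\log^+\|\b\|_v$, with the~$\log^+$. In any case an upper bound on that sum goes in the wrong direction --- you need a lower bound. The correct observation is that
\[
  \sum_{f(\a)=0}\sum_{v\in M_K^\infty}\log\|\a-1\|_v \;=\; \log|f(1)| \;\ge\; 0,
\]
so the whole term can simply be dropped. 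Equivalently --- and this is exactly what the paper does --- take the resultant of~$f$ with $G=(X-1)g$ directly; the extra factor is then $|f(1)|\ge1$ and is absorbed for free.

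Once that term is gone, the archimedean bound is just $|G(\a)|\le L(G)\max(1,|\a|)^{D+1}$; there is no ``pairing off'' that reduces the exponent to~$D$. Summing over roots and places gives $(D+1)\sum h(\a)+D\log L(G)$, and you are done. One more small fix: your justification for extending to all nonarchimedean~$v$ should say that the \emph{difference} $v(G(\a))-v(\a-1)=v(g(\a))\ge0$ (because $g(\a)$ is an algebraic integer), not that the two terms are separately nonnegative --- the latter is true but does not by itself preserve the inequality.
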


\begin{proof}
We are given that
\[
  f(X) = \F_D(X) + u(X) + mr(X)
  \qquad\text{for some $r(X)\in\ZZ[X]$.}
\]
Then
\begin{align*}
  \Resultant\bigl(f(X)&,X^{D+1}-1+(X-1)u(X)\bigr)\\*
  &= \Resultant\bigl(f(X),\F_D(X)+u(X)\bigr)
     \cdot\Resultant\bigl(f(X),X-1\bigr) \\
  &= \Resultant\bigl(mr(X),\F_D(X)+u(X)\bigr)f(1) \\
  &= m^D\Resultant\bigl(r(X),\F_D(X)+u(X)\bigr)f(1).
\end{align*}
By assumption, the resultants are nonzero integers, so we find that
\begin{align}
  \label{eqn:DlogmaD1DLX}
  D\log m
  &\le \log\left|\Resultant\bigl(f(X),X^{D+1}-1+(X-1)u(X)\bigr)\right| 
    \notag\\
  &= \sum_{f(\a)=0} \sum_{v\in M_K^\infty} \log\|\a^{D+1}-1+(\a-1)u(\a)\|_v
    \notag\\
  &= \sum_{v\in M_K^\infty}\sum_{\substack{f(\a)=0\\\|\a\|_v>1\\}} \log\|\a^{D+1}\|_v \notag\\
  &\qquad{}  + \sum_{v\in M_K^\infty}\sum_{\substack{f(\a)=0\\\|\a\|_v>1\\}} 
            \log\left\|\frac{\a^{D+1}-1+(\a-1)u(\a)}{\a^{D+1}}\right\|_v \notag\\
  &\qquad{}  + \sum_{v\in M_K^\infty}\sum_{\substack{f(\a)=0\\\|\a\|_v\le1\\}} 
                  \log\|\a^{D+1}-1+(\a-1)u(\a)\|_v  \notag\\
  &\le h(\a^{D+1}) + D\sup_{|z|=1}\log\bigl|z^{D+1}-1+(z-1)u(z)\bigr| \\
  &\le (D+1)h(\a) + D\log L\bigl(X^{D+1}-1+(X-1)u(X)\bigr). \notag
\end{align}
(We note that in \eqref{eqn:DlogmaD1DLX}, it suffices to take the
supremum over~$|z|=1$, since $\log|w|$ is harmonic inside the unit
disk.)
\end{proof}

\begin{remark}
The upshot of Theorem~\ref{thm:othercongruences} is that if~$m$ is
sufficiently large, then we obtain a Lehmer-type lower bound. However,
in the cyclotomic case, i.e., $u=0$, a Fourier averaging argument allowed us to
prove nontrivial estimates for all values of~$m\ge2$.  We do not know
how to perform such an averaging argument in the general case. It
would also be interesting to prove a version of
Theorem~\ref{thm:othercongruences} under the weaker assumption
that~$f(X)$ is divisible modulo~$m$ by~$\F_{n-1}(X)$ for, say, $n\ge\e
D$. Again we do not have the requisite averaging lemma.
\end{remark}

\begin{remark}
If we take $u=0$ in  Theorem~\ref{thm:othercongruences}, we obtain
the estimate
\[
  \sum_{f(\a)=0} h(\a) \ge \frac{D}{D+1}\log\frac{m}{2}.
\]
This has the same form as Theorem~\ref{thm:lehmodm}, although the
constant in Theorem~\ref{thm:lehmodm} is a little bit better than ours
(and our estimate is trivial for $m=2$).  On the other hand, it is
interesting that such an elementary argument produces a lower bound
that agrees with the best known lower
bounds~\cite{BoDoMo,BoHaMo,DuMo,GaIsMoPiWi} up to an
additional~$O(1/m^2)$.
\end{remark}

\begin{remark}
The estimate proven in Theorem~\ref{thm:othercongruences} is
nontrivial if and only if $m>L\bigl(X^{D+1}-1+(X-1)u(X)\bigr)$.  As
Samuels does in~\cite{MR2313990}, it is sometimes possible to improve
the estimate a little bit. We illustrate with the case $u(X)=-2$, so
\[
  f(X)\equiv X^D+X^{D-1}+\cdots+X^2+X-1\pmod{m}
\]
and
\[
  L\bigl(X^{D+1}-1-(X-1)u(X)\bigr)  =  L(X^{D+1}-2X+1)=4.
\]
This gives a nontrivial height bound for $m\ge5$.  If~$D$ is odd,
then the supremum in~\eqref{eqn:DlogmaD1DLX} occurs at~$z=-1$ and is
equal to~$\log4$, but if~$D$ is even, then the supremum is strictly
smaller than~$\log4$ and we can obtain a small improvement in the
theorem.  However, for large (even) values of~$D$ we have
\[
   \sup_{|z|=1}  |z^{D+1}-2z+1| = 4+O(D^{-2})
  \quad \text{at $z\approx -e^{\pi i/(D+1)}$,}
\]
so for $m=4$ we only obtain $\sum h(\a) \gg D^{-2}$, which is weaker
than Lehmer's conjecture.
\end{remark}

\section{Proof of Proposition~\ref{prop:fejarestimate}}
\label{section:fejarproof}

In this section we give the proof of
Proposition~\ref{prop:fejarestimate}, for which we need the
following standard lemma.

\begin{lemma}
\label{lemma:Gttheta}
For all $\theta\in\RR\setminus2\pi i\ZZ$ and all $t\ge0$
we have
\[
  \log|1-e^{i\theta}| \le \log|1-e^{-t}e^{i\theta}| + \frac12t.
\]
\end{lemma}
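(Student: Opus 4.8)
The plan is to reduce the inequality to the trivial bound $(1-r)^2\ge0$. First I would substitute $r=e^{-t}$, so that $0<r\le1$ as $t$ ranges over $[0,\infty)$, and the asserted inequality $\log|1-e^{i\theta}|\le\log|1-re^{i\theta}|-\tfrac12\log r$ becomes, after exponentiating and squaring (both sides being finite and nonnegative, since $\theta\notin2\pi\ZZ$), the equivalent statement $r\,|1-e^{i\theta}|^2\le|1-re^{i\theta}|^2$. Next I would expand the two squared norms using $|1-w e^{i\theta}|^2=1-2w\cos\theta+w^2$, giving $|1-e^{i\theta}|^2=2-2\cos\theta$ and $|1-re^{i\theta}|^2=1-2r\cos\theta+r^2$. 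Substituting these, the inequality to be proved is $2r-2r\cos\theta\le1-2r\cos\theta+r^2$, which simplifies to $2r\le1+r^2$, i.e.\ $(1-r)^2\ge0$. This holds for every real $r$, so in particular for $r=e^{-t}$ with $t\ge0$, and reversing the (reversible) steps yields the Lemma.

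There is essentially no obstacle here; the only point needing mild care is the bookkeeping showing the quantities involved are finite and positive so that exponentiation and squaring are legitimate equivalences. For that one notes $|1-re^{i\theta}|^2=(1-r)^2+2r(1-\cos\theta)>0$ whenever $0<r\le1$ and $\theta\notin2\pi\ZZ$, and likewise $|1-e^{i\theta}|^2>0$, so $\log|1-e^{i\theta}|$ and $\log|1-e^{-t}e^{i\theta}|$ are both real and the argument goes through unchanged. An equivalent route, which I might mention, is to show directly that $g(t):=\log|1-e^{-t}e^{i\theta}|+\tfrac12 t$ is nondecreasing on $[0,\infty)$: with $r=e^{-t}$ one computes $g'(t)=\dfrac{1-r^2}{2\,|1-re^{i\theta}|^2}\ge0$, whence $g(t)\ge g(0)=\log|1-e^{i\theta}|$; but this requires invoking differentiability, so I prefer the purely algebraic version above.
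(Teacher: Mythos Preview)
Your proof is correct and is genuinely simpler than the paper's own argument. The paper defines
\[
  F_t(\theta)=\log|1-e^{-t}e^{i\theta}|+\tfrac12 t-\log|1-e^{i\theta}|,
\]
observes $F_0(\theta)=0$, expands $\log|1-e^{-t}e^{i\theta}|$ as the real part of a power series, differentiates term by term in~$t$, and then verifies via a trigonometric identity that $\partial F_t/\partial t>0$. Your substitution $r=e^{-t}$ followed by the expansion $|1-we^{i\theta}|^2=1-2w\cos\theta+w^2$ collapses the whole inequality to $(1-r)^2\ge0$ with no analysis whatsoever, which is a clear gain in economy. The ``alternative route'' you sketch at the end, computing
\[
  g'(t)=\frac{1-r^2}{2\,|1-re^{i\theta}|^2}\ge0,
\]
is in fact exactly the paper's method, but carried out by direct differentiation of $\tfrac12\log(1-2r\cos\theta+r^2)$ rather than through a power series; so your throwaway remark already subsumes the published proof. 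Your check that $|1-re^{i\theta}|>0$ for $0<r\le1$ and $\theta\notin2\pi\ZZ$ is the only point needing care, and you handle it correctly.
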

\begin{proof}
For notational convenience we let
\[
  F_t(\theta) =
  \log|1-e^{-t}e^{i\theta}| + \frac12t - \log|1-e^{i\theta}|,
\]
so we need to prove that $F_t(\theta)\ge0$.  We have
\[
  F_0(\theta)=0
  \qquad\text{for all $\theta\in\RR\setminus2\pi i\ZZ$.}
\]
For $t>0$ we observe that
\[
  \log|1-e^{-t}e^{i\theta}|
   = \Re\biggl(\sum_{n=1}^\infty -\frac{e^{-nt}e^{in\theta}}{n}\biggr)
\]
is given by an absolutely convergent series, so we may differentiate it
term-by-term. Hence
\begin{align*}
  \frac{\partial F_t}{\partial t}(\theta)
  &= \Re\biggl(\sum_{n=1}^\infty (e^{-t+i\theta})^n\biggr) + \frac12 
  = \Re\left( \frac{e^{-t+i\theta}}{1-e^{-t+i\theta}}\right) + \frac12 \\
  &= \frac{e^{2t}-1}{(e^t-\cos\theta)^2+\sin^2\theta} 
   > 0 \qquad\text{for all $t>0$.}
\end{align*}
Thus for any fixed $\theta\in\RR\setminus2\pi i\ZZ$, the
function~$F_t(\theta)$ as a function of~$t\ge0$
satisfies~$F_0(\theta)=0$ and~$(\partial F_t/\partial t)(\theta)\ge
0$. Hence~$F_t(\theta)\ge0$ for all $t\ge0$. 
\end{proof}

{\tiny
Here is the elementary algebraic verification of the trigonometric
identity that was used in the above calculation.
\begin{align*}
  \Re\left( \frac{e^{-t+i\theta}}{1-e^{-t+i\theta}}\right) + \frac12 
  &= \Re\left(\frac{e^{i\theta}}{e^t-e^{i\theta}}\right) + \frac12  
  = \Re\left(\frac{e^{i\theta}(e^t-e^{-i\theta})}{e^{2t}-2(\cos\theta)e^t+1}\right)
          + \frac12 \\
  &= \Re\left(\frac{e^{i\theta}e^t-1}{e^{2t}-2(\cos\theta)e^t+1}\right)
          + \frac12 
  = \frac{e^t\cos\theta-1}{e^{2t}-2(\cos\theta)e^t+1}+\frac12 \\
  &= \frac{e^{2t}-1}{e^{2t}-2(\cos\theta)e^t+1} 
  = \frac{e^{2t}-1}{(e^t-\cos\theta)^2+\sin^2\theta}.
\end{align*}
}

\begin{proof}[Proof of Proposition~$\ref{prop:fejarestimate}$]
The functions $\log|1-z^j|$ are harmonic on the open unit disk $|z|<1$,
so the maximum occurs on the boundary. For $z=e^{i\theta}$ on the unit
circle we estimate
\begin{align*}
  \sum_{j=1}^J&\left(1-\frac{j}{J+1}\right)\log|1-z^j|\\
  &\le \sum_{j=1}^J \left(1-\frac{j}{J+1}\right) 
                    \left(\log|1-e^{-t}e^{ij\theta}| + \frac12t\right)
   &&\text{from Lemma \ref{lemma:Gttheta},} \\
  &= \Re\biggl(
      \sum_{j=1}^J \left(1-\frac{j}{J+1}\right) 
                       \sum_{k=1}^\infty -\frac{e^{-kt}e^{ijk\theta}}{k} \biggr)
     + \frac{Jt}{4} \\
  &= \Re\biggl(
      \sum_{k=1}^\infty \frac{e^{-kt}}{k}\sum_{j=1}^J 
          -\left(1-\frac{j}{J+1}\right)e^{ijk\theta} \biggr)+ \frac{Jt}{4} \\
  &=       \sum_{k=1}^\infty \frac{e^{-kt}}{k}
           \biggl(\frac12
           -\frac{1}{2J+2}\biggl|\sum_{j=0}^Je^{ijk\theta}\biggr|^2\biggr)
     + \frac{Jt}{4} \hidewidth\\
  &\ge \sum_{k=1}^\infty \frac{e^{-kt}}{2k} + \frac{Jt}{4} \\
  &= -\frac12\log(1-e^{-t}) + \frac{Jt}{4}.
\end{align*}
This estimate holds for all $t>0$, so in particular we can
set $t=\log(1+2J^{-1})$, which (after some algebra) gives the estimate
\[
  \sum_{j=1}^J f_j\log|1-z^j|
  \le \frac12\log\left(\frac{J}{2}+1\right)
      +\frac{J}{4}\log\left(1+\frac{2}{J}\right).
\]
Finally we observe that $x\log(1+x^{-1})\le 1$ for all $x\ge0$, which
gives the desired result.
\end{proof}




\end{document}